%% file: main.tex
\documentclass[10pt]{article}
\usepackage[
  margin=3cm,
  includefoot,
  footskip=30pt,
]{geometry}

\usepackage[T1]{fontenc}
\usepackage{textcomp}

\usepackage{libertinus}

\usepackage{url}
\usepackage{fullpage}
\usepackage{diagbox}
\usepackage{array}
\usepackage[table]{xcolor}
\usepackage{arydshln}
\usepackage{natbib}

\usepackage{amsthm,amsfonts,amsmath,amssymb,epsfig,color,float,graphicx,verbatim,bm,bbm}
\usepackage{enumerate}
\usepackage{enumitem}
\usepackage{wrapfig}
\usepackage{subcaption}
\usepackage[export]{adjustbox}
\usepackage{nicefrac}
\usepackage{hhline}
\usepackage{multicol}
\usepackage{multirow}
\usepackage[dvipsnames]{xcolor}

\usepackage[ruled, vlined, linesnumbered]{algorithm2e}

\usepackage{hyperref}
\hypersetup{
	colorlinks   = true, 
	urlcolor     = blue!75!black, 
	linkcolor    = blue!75!black, 
	citecolor   = blue!75!black 
}

\input{symbols}

\title{
The Complexity of Finding Stationary Points\\in Nonsmooth Nonconvex Optimization
}
\author{Guy Kornowski}
\date{}

\begin{document}

\maketitle

\begin{abstract}
We prove that first-order algorithms require $\Omega(\delta^{-1}\eps^{-3})$ gradient queries (in the worst case) to find a $(\delta,\eps)$-Goldstein stationary point of a Lipschitz function, at which there is a convex combination of gradients within distance $\delta$ whose norm is at most $\eps$.
This lower bound is 
tight, matching known algorithms up to absolute constants, therefore resolving the complexity of convergence to stationarity in nonsmooth nonconvex optimization.
We further prove a tight lower bound of $\Omega(\lambda^{1/2}\eps^{-7/2})$ for finding points satisfying the recently proposed relaxed notion of $(\lambda,\eps)$-stationarity, which allows combining further-away gradients.
Our results reveal that convergence rates to nonsmooth stationarity are not affected by gradient stochasticity,
in sharp contrast to smooth optimization.
\end{abstract}

\section{Introduction}

We consider optimization problems associated with an objective function
$f:\reals^d\to\reals$ which is not necessarily smooth or convex. Such objectives are extremely common in modern applications, such as those involving neural networks,
but they cannot, in general, be globally minimized efficiently
\citep{Nemirovski-1983-Problem,Murty-1987-Some}. This discrepancy has motivated the introduction of local stationarity measures for analyzing the convergence of optimization methods. For example, for smooth objectives where $\nabla f$ is Lipschitz continuous, it is known that gradient descent can find an $\eps$-stationary point,
namely $\bx$ such that $\|\nabla f(\bx)\|\leq\eps$, within $O(1/\eps^{2})$ steps, independent of the dimension $d$. Moreover, this convergence rate is known to be optimal among all first-order methods \citep{carmon2020lower}.
Extending stationarity-based convergence analyses to nonsmooth objectives is not so simple.
Indeed, classical works only established asymptotic convergence in such settings \citep{Burke-2005-Robust,Kiwiel-2007-Convergence},
and while it remains true that even simple subgradient methods asymptotically converge to stationary points under mild assumptions \citep{davis2020stochastic}, getting anywhere near a point with small (sub-)gradient norm may require $\exp(\Omega(d))$ many steps \citep{kornowski2022oracle}.

In a landmark paper, \citep{zhang2020complexity} bypassed such hardness results and introduced a relaxed stationarity notion based on the Goldstein subdifferential \citep{goldstein1977optimization}. Roughly, a point $\bx$ is called $(\delta,\eps)$-Goldstein stationary if there exists a convex combination of gradients in a $\delta$-ball around $\bx$ whose norm is less than $\eps$. Remarkably, \citet{zhang2020complexity} showed that $O(1/\delta\eps^{3})$ first-order queries suffice to find such a point, independently of the dimension $d$.
Since then, Goldstein stationarity has found many applications and has been extensively studied, and there are by now many algorithms which are proven to converge efficiently to such points in various settings, including general Lipschitz optimization \citep{Tian-2022-Finite,davis2022gradient} 
and its variants such as stochastic \citep{cutkosky2023optimal}, zero-order \citep{lin2022gradient,chen2023faster,kornowski2024algorithm}, constrained \citep{grimmer2024goldstein,liu2024zeroth}, private \citep{zhang2023private,kornowski25improved}, and minimax optimization \citep{shi2026nonsmooth},
as well as higher-order extensions \citep{guan2026computing,guan2026hardness}.

A fundamental question throughout this vast body of work is understanding the complexity of finding $(\delta,\eps)$-Goldstein stationary points. For stochastic problems, \citet{cutkosky2023optimal} proved that  $\Theta(1/\delta\eps^3)$ stochastic gradient queries are both sufficient and necessary, thus matching the \emph{smooth} stochastic rate of $\Theta(1/\eps^{4})$ under the natural parameter regime $\delta\approx \eps$ \citep{ghadimi2013stochastic,arjevani2023lower}.
Interestingly though, for deterministic problems in which exact gradients are available, despite substantial effort, no algorithm is known to achieve a complexity better than $O(1/\delta\eps^3)$, attained already by \citep{zhang2020complexity}. In comparison, as previously discussed, deterministic smooth optimization requires only $O(1/\eps^{2})$ gradient queries.
This left the following fundamental question open:

\begin{center}
\begin{minipage}{0.9\linewidth}
\textit{
What is the gradient-oracle complexity of finding $(\delta,\eps)$-Goldstein stationary points of nonsmooth nonconvex functions? In particular,
do deterministic gradients enable faster convergence than stochastic ones?
} 
\end{minipage}
\end{center}

In this work, we resolve this question by proving that $\Omega(1/\delta\eps^{3})$ gradient queries are necessary to find a $(\delta,\eps)$-Goldstein stationary point, even when the gradients are deterministic. In light of previously discussed algorithmic results, this lower bound is tight.
Interestingly, our result implies that under the Goldstein framework, nonsmooth nonconvex optimization problems do not suffer from stochasticity,
thus answering an open question asked by \citet{cutkosky2023optimal}. 
In Table~\ref{tab: complexity}, we summarize the optimal (i.e., minimax) oracle complexities of stationarity in nonconvex optimization, with our result indicated in blue.

In addition, we consider the complexity of finding $(\lambda,\eps)$-stationary points, a stationarity notion proposed by \citet{zhang2024randomscaling}, which relaxes Goldstein stationarity by allowing gradients to be combined beyond a ball (the formal definition appears in Section~\ref{sec: prelim}). This relaxation has proved to be widely useful in analyzing various algorithms which are popular in practice, such as Adam \citep{ahn2024adam}, schedule-free methods \citep{ahn2025general},
and spectral optimizers \citep{jiang2026adaptive,li2026muon}.

In this context, we prove a lower bound showing that $\Omega(\lambda^{1/2}/\eps^{7/2})$ deterministic gradient queries are necessary to find a $(\lambda,\eps)$-stationary point. This tightly matches previously known algorithmic upper bounds up to constants \citep{zhang2024randomscaling}. Moreover, similarly to our conclusion for Goldstein stationarity, by matching even the optimal stochastic rate,
our result proves that gradient stochasticity does not affect the convergence rate to such points.

\begin{table}[h]
\centering

\renewcommand{\arraystretch}{1.5}
\setlength{\tabcolsep}{8pt}

\begin{tabular}{|c|c c|}
\hhline{|-|-|-|}
function~~\textbackslash~~gradient oracle
&
Deterministic
&
\multicolumn{1}{|c|}{Stochastic}
\\
\hline
Nonsmooth
&
${\color{blue}\Theta(\delta^{-1}\epsilon^{-3})}$
&
\multicolumn{1}{:c|}{$\Theta(\delta^{-1}\eps^{-3})$}
\\
\cline{1-2}\cdashline{3-3}Smooth
&
$\Theta(\eps^{-2})$
&
\multicolumn{1}{|c|}{$\Theta(\eps^{-4})$}
\\\hline
\end{tabular}

\caption{First-order complexities for finding stationary points of nonconvex functions (omitting problem parameters).
Notably, for $\delta\approx\eps$ all complexities coincide, except for smooth deterministic optimization.}
\label{tab: complexity}

\end{table}

\section{Preliminaries} \label{sec: prelim}

\paragraph{Notation.}

Let $[n]:=\{1,\dots,n\}$.
We use boldface font to denote vectors, e.g., let $\bx\in\reals^d$, whose $i^\textnormal{th}$ coordinate is denoted by $x_i$.
We denote $\bx\vleq\by$ if $x_i\leq y_i$ for all $i$.
We denote a vector's support by
$\supp(\bx):=\{i:x_i\neq 0\}$, and of a set by $\supp(\Xcal):=\cup_{\bx\in\Xcal}\supp(\bx)$.
Given a vector-valued function $\mathbf{f}:\reals^d\to\reals^n$ we denote $\mathbf{f}(\bx)=(f_1(\bx),\dots,f_n(\bx))$.
$\norm{\bx}$ denotes the Euclidean norm,
$\inner{\,\cdot\,,\,\cdot\,}$ is the standard Euclidean dot product,
and $\BB(\bx,\delta):=\{\by\in\reals^d:\norm{\by-\bx}\leq\delta\}$ denotes a closed ball. We denote by $\bzero$ the zero vector and by $\bone$ the all-ones vector, where the dimension is clear from context, and let $\dist(\bzero,\Xcal):=\inf_{\bx\in \Xcal}\|\bx\|$. We denote by $\S^{d-1}\subset\reals^d$ the unit sphere, and $\e_i$ is the $i^\textnormal{th}$ standard basis vector.
We denote the ReLU operation as $[x]_{+}=\max\{0,x\}$, applied to vectors coordinate-wise.
We use the standard big-O notation, with $O(\cdot)$, $\Theta(\cdot)$ and $\Omega(\cdot)$ hiding absolute constants that do not depend on problem parameters.

\paragraph{Nonsmooth Optimization.}

A function $f:\reals^d\to\reals$ is called $L$-Lipschitz if for any $\x,\y\in\reals^d:|f(\x)-f(\y)|\leq L\norm{\x-\y}$.
By Rademacher's theorem, Lipschitz functions are differentiable almost everywhere (with respect to Lebesgue measure). Hence, for any Lipschitz function $f:\reals^d\to\reals$ and point $\x\in\reals^d$ the Clarke subdifferential \citep{Clarke-1990-Optimization} can be defined as
\[
\partial f(\x):=\conv\{\g\,:\,\g=\lim_{n\to\infty}\nabla f(\x_n),\,\x_n\to \x\}~,
\]
namely, the convex hull of all limit points of $\nabla f(\x_n)$ over all sequences of differentiable points which converge to $\x$.
Given $\delta\geq 0$ the Goldstein $\delta$-subdifferential \citep{goldstein1977optimization} of $f$ at $\x$ is the set
\[
\partial_{\delta}f(\x):=\conv\left(\cup_{\y\in \B(\x,\delta)}\partial f(\y)\right)~,
\]
namely, all convex combinations of Clarke subgradients at points in a $\delta$-neighborhood of $\x$.

\begin{definition}[\citealp{zhang2020complexity}]
A point $\bx$ is a $(\delta,\epsilon)$-Goldstein stationary point of $f$
if $\dist(\bzero,\partial_\delta f(\bx))\leq\eps$.
\end{definition}

Given $\lambda>0$, we further denote
\[
\mathcal{P}_\lambda f(\bx):=\inf_{\substack{\text{random vector}~\by\in\reals^d\\\bg\in\partial f(\by)\text{~a.s.}}}\left\{\|\E[\bg]\|+\lambda\cdot\E\|\bx-\by\|^2\right\}
~.
\]

\begin{definition}[\citealp{zhang2024randomscaling}]
A point $\bx$ is a $(\lambda,\epsilon)$-stationary point of $f$
if $\Pcal_\lambda f(\bx)\leq\eps$.
\end{definition}

Note that by only considering distributions over $\by$
supported on $\BB(\bx,\delta)$, it is clear that
$\Pcal_\lambda f(\bx)\leq\dist(\bzero,\partial_\delta f(\bx))+\lambda\delta^2$, and so any $(\sqrt{\eps/2\lambda},\eps/2)$-Goldstein stationary point is $(\lambda,\eps)$-stationary.

\begin{remark}
\citet{zhang2024randomscaling} included an additional requirement in the definition of $\Pcal_\lambda$ that $\E[\by]=\bx$, which we chose not to include for a more direct comparison to Goldstein stationarity. Note that omitting this condition makes the definition easier to satisfy, hence only implies a stronger lower bound.
\end{remark}

\paragraph{Algorithms.}
We consider iterative first-order algorithms, in the standard oracle complexity framework \citep{Nemirovski-1983-Problem}:
At each time step $t\in\NN$, an algorithm produces an iterate $\bx_t$, receives the oracle response $(f(\bx_t),\partial f(\bx_t))$, and chooses its next iterate, possibly at random, based on all the previous oracle responses.\footnote{For the sake of proving a lower bound, assuming an algorithm can access the entire Clarke subdifferential, even though this is typically not the case, only makes the result stronger.}
In this work, we focus on the class of \emph{zero-respecting} algorithms \citep{carmon2020lower},
which is a general class defined as satisfying $\bx_0=\bzero$, and
\[
\Pr\left[\supp(\bx_t)\subseteq\cup_{t'<t}\supp(\partial f(\bx_{t'}))\right]=1
~~~~\text{for all~~}t\in\NN~.
\]
We note that this algorithm class subsumes the classical linear-span assumption \citep{Nesterov-2018-Lectures}, and includes essentially all first-order algorithms of interest in our setting.\footnote{Note that some algorithms analyzed in prior works perturb the iterates, which seemingly defies the zero-respecting condition. However, perturbations are only considered for the sake of ensuring that iterates are differentiable, which helps avoid computationally infeasible subgradient selection procedures. Since we allow the algorithm to see the entire subdifferential and do not assume bounded computation, no perturbations are needed for prior algorithms to work.} We further emphasize that this definition allows algorithms to be randomized, which is crucial in this context, since it is known that deterministic algorithms cannot converge to Goldstein stationary points in a dimension-free manner \citep{jordan2023deterministic}.

\section{Main Results}

We are now ready to present our lower bound for finding Goldstein stationary points:

\begin{theorem} \label{thm: main}
For some absolute constant $c>0$ the following holds:
For any $L,\Delta,\delta,\eps>0$ satisfying $\eps\leq c\cdot\min\{L,\delta L^2\Delta^{-1},\Delta\delta^{-1}\}$, there exists an $L$-Lipschitz function $f:\reals^d\to\reals$, where $d=\Theta({\Delta L^2}{\delta^{-1}\eps^{-3}})$,
satisfying $f(\bzero)-\inf f\leq\Delta$, such that for any zero-respecting first-order algorithm, all of its first $T$ iterates are not $(\delta,\eps)$-Goldstein stationary points of $f$, unless
\[
T\geq c\cdot \frac{\Delta L^2}{\delta\eps^3}~.
\]
\end{theorem}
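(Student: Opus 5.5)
We will build a hard function from a nonsmooth chain in the spirit of \citet{carmon2020lower}: a function $F:\reals^T\to\reals$ in which a zero-respecting algorithm discovers at most one new coordinate per query, and where every point whose support is missing the last coordinate is far from Goldstein stationary. We then rescale by the parameters $L,\Delta,\delta,\eps$.

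\textbf{The construction.} The count $\delta^{-1}\eps^{-3}$ suggests $T\approx\Delta L^2/(\delta\eps^3)$ coordinates. We would take a sum of $T$ nonsmooth ``activation'' terms, $F(\bx)=\sum_{i=1}^{T}\psi(x_{i-1},x_i)$, with $x_0$ fixed. We would build $\psi$ from ReLUs, $[\cdot]_+$ pieces, so that two properties hold.

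First, the gradient with respect to $x_{i+1}$ vanishes identically, including in the full Clarke subdifferential, unless $x_i$ exceeds a threshold of order $\delta$. Since the algorithm sees the whole subdifferential, this must hold robustly. This gives the zero-respecting progress property: after $t$ queries, $\supp(\bx_t)\subseteq[t]$.

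Second, the function decreases along the chain with slope about $\eps$ per coordinate. Each coordinate must travel distance about $\delta$ before the next one activates. The total decrease is then about $T\cdot\eps\cdot\delta\cdot(\text{scale})$, which must stay below $\Delta$.

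The dimension comes from the rescaling. Take Lipschitz constant $L$, and note that the gradient norms along a chain of length $T$ add in $\ell_2$. The scale choices are: gradient entries of size $\eps$ on each active coordinate, and each coordinate moving distance $\delta$. This gives total movement $\sqrt{T}\,\delta$. The hard part is matching $\Delta L^2$ rather than $\Delta/(\eps\delta)$. This suggests each coordinate's ``unit'' should be a block, or a coordinate step of size $\eps^2\delta/L^2$ repeated $L^2/\eps^2$ times. Equivalently, the chain can use gradient components of size $L$ only on a sparse set of coordinates.

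We would therefore tune it as follows. Per activated link, the function decreases by $\eps^2\cdot(\delta\eps/L^2)\cdots$. The target is that $\Delta/(\text{decrease per link})=\Delta L^2/(\delta\eps^3)$, which fixes the per-link drop to $\delta\eps^3/L^2$. The constraints $\eps\le c\min\{L,\delta L^2/\Delta,\Delta/\delta\}$ ensure $T\ge1$ and consistency of these scales.

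\textbf{Non-stationarity.} This is the main obstacle. We must show that for any $\bx$ with $x_T=0$, and in fact $x_j=0$ for all $j\ge t+1$, every convex combination of Clarke subgradients over $\BB(\bx,\delta)$ has norm above $\eps$.

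The standard trick from the stochastic lower bound of \citet{cutkosky2023optimal} and \citet{kornowski2022oracle} is to find a fixed unit direction $\bv$ with $\langle\bg,\bv\rangle\le-\eps$ for all $\bg\in\partial f(\by)$ and all $\by\in\BB(\bx,\delta)$. This bound is preserved under convex hulls. The difficulty is that a whole $\delta$-ball is far bigger than the nonsmooth kinks.

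We plan to use a ``staircase'' design so that $\bv$ is the normalized all-ones vector on the coordinates above the frontier, or a direction pushing the first inactive coordinates. Because the ball has radius $\delta$ while activation requires a coordinate to exceed the threshold of order $\delta$ (say $2\delta$), points in the ball still have a long zero tail. There, the negative slope along the tail direction is present uniformly. We would verify case by case, over which ReLU pieces are active, that $\langle\bg,\bv\rangle\le-\eps$.

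Finally, we conclude: the first $T$ iterates have $x_T=0$, so none of them is $(\delta,\eps)$-stationary. We also check $F(\bzero)-\inf F\le\Delta$ and the Lipschitz bound via the ReLU structure.
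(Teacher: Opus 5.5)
There is a genuine gap. The proposal never produces a construction, and the design it points to cannot give the stated rate. That design is a single chain $\sum_i\psi(x_{i-1},x_i)$ in which each link needs an $O(\delta)$ displacement. You observe yourself that an $O(\delta)$ displacement per link at slope $\eps$ supports only about $\Delta/(\delta\eps)$ links. The subsequent ``per-link drop $\delta\eps^3/L^2$'' is reverse-engineered from the target rather than produced by any mechanism.

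The underlying obstruction is that in a one-coordinate-per-link chain, the subgradients near the frontier live on a couple of coordinates. Nothing forces an algorithm to spend more than $O(1)$ queries per Goldstein descent step of length $\delta$. The missing idea is an inner loop that makes each such descent step cost $\approx L^2/\eps^2$ queries.

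In the paper this inner loop works as follows:
\begin{itemize}
\item Coordinates are grouped into $B\approx\Delta/(\delta\eps)$ blocks of $n\approx L^2/\eps^2$ coordinates each.
\item Inside a block, the nonsmooth term is a clipped shifted median $h$ of $(\theta_j-z_j)_j$. Its subgradients are convex combinations of vectors $-\be_j$ with at most one unrevealed index $j$, so only one new coordinate is revealed per query.
\item When $|h|<1$ (which holds on active blocks), these subgradients have coordinates summing to $-1$. Hence each has inner product exactly $1/\sqrt n$ with $-\frac{1}{\sqrt n}\bone$.
\item This is what forces every convex combination of nearby subgradients to have norm $\Omega(1/\sqrt n)$, i.e.\ $\Omega(\eps)$ after rescaling to Lipschitz constant $L$.
\item A smoothstep coupling activates block $i$ only once $\widetilde{\psi}(\bx^{i-1})>1/32$, i.e.\ after $\Omega(n)$ coordinates of block $i-1$ have been revealed.
\end{itemize}

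Your non-stationarity certificate also fails as stated. An all-ones direction on the zero tail has essentially zero inner product with the subgradients at $\bx$ itself, because by the zero-chain property these vanish beyond the next coordinate. Moreover, ``case by case over ReLU pieces'' is where all the difficulty lies. In a correctly scaled construction the per-coordinate scale is $\approx\delta\eps/L$, so the $\delta$-ball reaches many unrevealed coordinates of the current block. You cannot rely on points in the ball having an untouched tail.

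The paper handles this with $\bw'\propto 2\bu+\bv$, where $\bv$ is the all-ones vector on each block weighted by $r_i(\bx)$ and $\bu=-\nabla\bar{\Psi}(\bx)/\|\nabla\bar{\Psi}(\bx)\|$. The argument uses four facts:
\begin{itemize}
\item Every subgradient satisfies $\bg\vleq-4\nabla\bar{\Psi}\vleq\bzero$ coordinatewise.
\item An active block contributes $-\sum_j g^i_j\geq w_i$.
\item The active pattern $[\bsigma(\by)]_+$ at any $\by$ in the ball stays close to $\br(\bx)$. This holds because $\br$ is $O(1/\sqrt n)$-Lipschitz and the unscaled radius is at most $\sqrt n/5000$.
\item The discrepancy caused by $h$ is absorbed by the $\bar{\Psi}$ term. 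This is exactly where a median, rather than a max, is needed: it guarantees $\|\nabla\bar{\psi}(\bz)\|\gtrsim |h(\bz)|/\sqrt n$.
\end{itemize}
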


As previously discussed, Theorem~\ref{thm: main}
is tight up to a numerical constant due to existing upper bounds, thus establishing the optimal first-order oracle complexity of finding Goldstein stationary points.
Moreover, the rate is matched by stochastic algorithms \citep{cutkosky2023optimal}, therefore establishing that the first-order oracle complexity of finding Goldstein stationary points does not depend on the stochasticity of the gradients.
In particular, this result improves the previously best-known lower bound for this task, which was $\Omega(1/\eps^2)$, due to \citet{kornowski2022complexity}. 

We will now sketch the idea behind the proof, which appears in Section~\ref{sec: proofs}.
At a high level, the proof follows a standard lower bound recipe \citep{Nemirovski-1983-Problem,Nesterov-2018-Lectures,carmon2020lower,arjevani2023lower}: construct a function $f$ which simultaneously satisfies $(i)$ a ``zero-chain'' property, meaning that the support of any subgradient at $\bx$ is not much larger than the support of $\bx$, and $(ii)$ all solution points (in this case, Goldstein stationary) have large ``progress'', meaning that their last coordinates are non-zero. Clearly, $(i)+(ii)$ imply a lower bound via the zero-respecting property.

As the starting point of our concrete construction, we first recall existing \emph{upper} bounds.
All prior algorithms for finding Goldstein stationary points follow (either explicitly or implicitly) Goldstein's conceptual method \citep{goldstein1977optimization}, which consists of a double-loop structure, with an outer iteration decreasing the function value by order $\delta\eps$, and an inner loop which finds such a descent direction by combining $O(1/\eps^2)$ gradients, thus leading to a complexity of $O(\frac{1}{\delta\eps}\cdot\frac{1}{\eps^2})=O(\frac{1}{\delta\eps^3})$ (assuming a bound on initial suboptimality).
Interestingly, the construction of our lower bound is directly informed by such upper bounds:
We partition coordinates into $B=\Omega(1/\delta\eps)$ ``blocks'' of length $n=\Omega(1/\eps^2)$ each, and construct a function with a ``double-loop zero-chain'' property (Lemma~\ref{lem: zero chain}), ensuring both that each individual block cannot progress too quickly (i.e., an ``inner chain''), and also that activating a block requires substantial progress in the preceding block (i.e., an ``outer chain'').
The main technical challenge in the analysis is further guaranteeing that convex combinations of gradients, which can activate arbitrary blocks, remain large. To this end, for every point $\bx$ with a zero final block,
we explicitly construct a unit vector $\bw'(\bx)\in\SS^{d-1}$ whose inner product with all nearby gradients is $\Omega(1/\sqrt{n})$, implying that any
convex combination of nearby gradients cannot be smaller than order $1/\sqrt{n}\approx\eps$ (Lemma~\ref{lem: goldstein large}).

Next, we extend our lower bound to the task of finding $(\lambda,\eps)$-stationary points:

\begin{theorem} \label{thm: lambda}
For some absolute constant $c>0$ the following holds:
For any $L,\Delta,\lambda,\eps>0$ satisfying $\eps\leq c\cdot\min\{L,\Delta^{2/3}\lambda^{1/3},L^4\Delta^{-2}\lambda^{-1}\}$, there exists an $L$-Lipschitz function $f:\reals^d\to\reals$, where $d=\Theta({\Delta L^2\sqrt{\lambda}}{\eps^{-7/2}})$,
satisfying $f(\bzero)-\inf f\leq\Delta$, such that for any zero-respecting first-order algorithm, all of its first $T$ iterates are not $(\lambda,\eps)$-stationary points of $f$, unless
\[
T\geq c\cdot \frac{\Delta L^2\sqrt{\lambda}}{\eps^{7/2}}~.
\]
\end{theorem}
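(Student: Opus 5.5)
We would prove Theorem~\ref{thm: lambda} by reusing the hard instance behind Theorem~\ref{thm: main}, with the radius $\delta$ chosen as a function of $\lambda$ and $\eps$. The key is a robust form of Lemma~\ref{lem: goldstein large}. For every point $\bx$ whose final block is zero, that lemma gives a unit vector $\bw'(\bx)$ with $\inner{\bw'(\bx),\bg}\geq c_0/\sqrt{n}$ for all $\bg\in\partial f(\by)$ with $\norm{\by-\bx}\leq\delta$. For far-away $\by$ we have no control over the sign of $\inner{\bw'(\bx),\bg}$. We only know $\inner{\bw'(\bx),\bg}\geq -L$ by Lipschitzness.

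Fix any random $(\by,\bg)$ with $\bg\in\partial f(\by)$ a.s., and let $p:=\Pr[\norm{\by-\bx}>\delta]$. Then
\[
\norm{\E\bg}\geq\inner{\bw'(\bx),\E\bg}\geq (1-p)\frac{c_0}{\sqrt n}-pL,
\qquad
\lambda\,\E\norm{\bx-\by}^2\geq \lambda p\delta^2 .
\]
If $p\leq c_0/(4L\sqrt n)$, the first bound is at least $c_0/(2\sqrt n)$. Otherwise the second bound is at least $\lambda\delta^2 c_0/(4L\sqrt n)$. So $\Pcal_\lambda f(\bx)>\eps$ as soon as $c_0/(2\sqrt n)>\eps$ and $\lambda\delta^2 c_0/(4L\sqrt n)>\eps$.

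Normalize to $L=1$ for the heuristic. Take $n\approx\eps^{-2}$, so the gradient threshold is of order $\eps$. The second condition then forces $\lambda\delta^2\eps\gtrsim\eps$, i.e.\ $\delta\approx\lambda^{-1/2}$. That is too large and would not give the claimed rate. The issue is the crude $-L$ bound, so the balance must be done differently.

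The construction should instead be rescaled: the inner-chain length $n$ and $\delta$ are chosen jointly, with gradient scale $\eps$. Take $n\approx\eps^{-2}$ as before. The outer chain has $B\approx\Delta/(\delta\eps)$ blocks, giving $T\approx\Delta/(\delta\eps^3)$. For the target $\Delta\sqrt\lambda\,\eps^{-7/2}$ we need $\delta\approx(\eps/\lambda)^{1/2}$, which is exactly the radius where $\lambda\delta^2\approx\eps$.

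So the real requirement is this: mass placed at distance $r>\delta$ must not help by more than roughly $\lambda r^2$ costs. I would therefore strengthen Lemma~\ref{lem: goldstein large} to a graded statement. For $\by$ at distance $r$ from $\bx$, gradients satisfy $\inner{\bw'(\bx),\bg}\geq c_0/\sqrt n - C r/\delta\cdot(\text{something of order }\eps)$. This should follow from the block structure: moving distance $r$ can activate only about $r/\delta$ further blocks, and each block perturbs the inner product with $\bw'$ by at most $O(1/\sqrt n)\approx\eps$. This bound holds provided $\bw'(\bx)$ spreads its mass across blocks, which I expect the explicit construction allows.

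With the graded bound, the total cost is
\[
\E\Big[\tfrac{c_0}{\sqrt n}-C\eps\norm{\by-\bx}/\delta\Big]_{\text{lin}}+\lambda\E\norm{\by-\bx}^2
\;\geq\; c_0\eps-\sup_r\{C\eps r/\delta-\lambda r^2\}
= c_0\eps-\frac{C^2\eps^2}{4\lambda\delta^2}.
\]
With $\delta=K\sqrt{\eps/\lambda}$ for a large constant $K$, this is at least $c_0\eps/2$, and rescaling $\eps$ by constants finishes.

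The remaining steps are bookkeeping:
\begin{itemize}
\item Substitute $\delta=K\sqrt{\eps/\lambda}$ into Theorem~\ref{thm: main} (or its proof) to get $T\gtrsim \Delta L^2\sqrt\lambda\,\eps^{-7/2}$ and $d=nB$ of the same order.
\item Check that its hypotheses $\eps\lesssim\min\{L,\delta L^2/\Delta,\Delta/\delta\}$ become the stated conditions $\eps\lesssim L$, $\eps\lesssim\Delta^{2/3}\lambda^{1/3}$ (from $\eps\lesssim\Delta/\delta$) and $\eps\lesssim L^4\Delta^{-2}\lambda^{-1}$ (from $\eps\lesssim\delta L^2/\Delta$). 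Both reduce to those forms after squaring.
\end{itemize}

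The main obstacle is the graded version of Lemma~\ref{lem: goldstein large}: showing that the certificate direction $\bw'(\bx)$ degrades only linearly in the distance, measured in units of $\delta$, rather than collapsing beyond the $\delta$-ball. If the existing $\bw'$ does not have this property, the first thing I would try is averaging $\bw'$ over the next $O(1)$ blocks.
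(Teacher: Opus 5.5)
Your overall plan matches the paper: reuse the hard instance with $\delta\asymp\sqrt{\eps/\lambda}$, and translate the hypotheses of Theorem~\ref{thm: main} into the three conditions on $\eps$. That translation is correct. The gap is the central step. The ``graded'' version of Lemma~\ref{lem: goldstein large} is only asserted, via a heuristic about how many blocks a displacement of size $r$ can activate. Nothing in that heuristic stops $\inner{\bw'(\bx),\bg}$ from becoming negative at far points, and you flag this step yourself as the unresolved obstacle.

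More importantly, the premise that led you to need a graded bound is false. You write that for far-away $\by$ there is no control over the sign of $\inner{\bw'(\bx),\bg}$ beyond $-L$. But Lemma~\ref{lem: g<0} gives $\bg\vleq\bzero$ for every subgradient at every point of $\reals^d$, and Lemma~\ref{lem: goldstein large} explicitly produces $\bw'(\bx)\vleq\bzero$. Hence $\inner{\bw'(\bx),\bg}\geq 0$ for all $\by$ and all $\bg\in\partial f(\by)$. This sign structure is the missing idea.

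With it, you need no grading and no averaging of $\bw'$ over further blocks. Take $\delta=2\sqrt{\eps/\lambda}$, so that $\lambda\delta^2=4\eps$, and after scaling $\inner{\bw',\bg}\geq 2\eps$ on $\BB(\bx,\delta)$. Then pointwise $\inner{\bw',\bg}+\lambda\norm{\bx-\by}^2\geq 2\eps$ in both cases:
\begin{itemize}
\item inside the ball, by Lemma~\ref{lem: goldstein large};
\item outside the ball, by nonnegativity together with $\lambda\delta^2=4\eps$.
\end{itemize}
Taking expectations and using $\norm{\E\bg}\geq\inner{\bw',\E\bg}$ gives $\Pcal_\lambda f(\bx)\geq 2\eps$. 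This is exactly the paper's argument.

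Your graded inequality is in fact true, but only as a trivial consequence of the same fact: $\inner{\bw',\bg}\geq 2\eps\,(1-\norm{\by-\bx}/\delta)$ for all $\by$. Your optimization over $r$ then yields a lower bound of $2\eps-\eps^2/(\lambda\delta^2)=\tfrac{7}{4}\eps$. So your route can be completed, but only through the nonnegativity you dismissed, not through the block-counting heuristic.
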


As mentioned, Theorem~\ref{thm: lambda}
is tight up to a numerical constant due to existing upper bounds, thus establishing the optimal first-order oracle complexity of finding $(\lambda,\eps)$-stationary points. 
Once again, this rate is matched by stochastic algorithms \citep{zhang2024randomscaling}, 
therefore establishing that the first-order oracle complexity of finding stationary points under this relaxed notion does not depend on the stochasticity of the gradients as well.
We are not aware of any previous lower bound for this task when having access to deterministic gradients.

The proof of Theorem~\ref{thm: lambda} considers the same lower bound construction as in the proof of Theorem~\ref{thm: main}. The only missing ingredient is that we further show that all subgradients, even at points with non-zero last block, are non-negatively correlated with the aforementioned direction $\bw'$. Therefore, taking combinations with far-away gradients cannot help in 
decreasing $\Pcal_\lambda$, since assigning probability mass to distant points in order to minimize the combination's norm is compensated by the quadratic distance penalty.

\section{Proofs} \label{sec: proofs}

\subsection{Proof of Theorem~\ref{thm: main}}

Let $d=Bn$ for some sufficiently large natural number $B\geq 2$ and odd number $n\geq 99$ to be specified later.
Throughout the proof, given $\bx\in\reals^d$, we denote $\bx=(\bx^1,\dots,\bx^B)$ where $\bx^i\in\reals^n$, namely, we view a vector as consisting of $B$ ``blocks'' of $n$ coordinates each.

We start the proof by introducing several functions which will be used to construct our lower bound.
Denote the smoothstep function $s:\reals\to[0,1]$
and the odd smoothstep $\psi:\reals\to[-1,1]$ (see Figure~\ref{fig:s-psi}) as

\begin{center}
\begin{minipage}[c]{0.66\textwidth}
\vspace{-24pt}
\[
s(t):=\begin{cases}
    0, & t\leq0\\
    3t^2-2t^3, & t\in(0,1)\\
    1, & t\geq 1
\end{cases}
~,
\qquad
\psi(t):=s(t)-s(-t)~.
\]
\end{minipage}
\begin{minipage}[c]{0.32\textwidth}
\centering
\raisebox{-6pt}{
\includegraphics[height=7em]{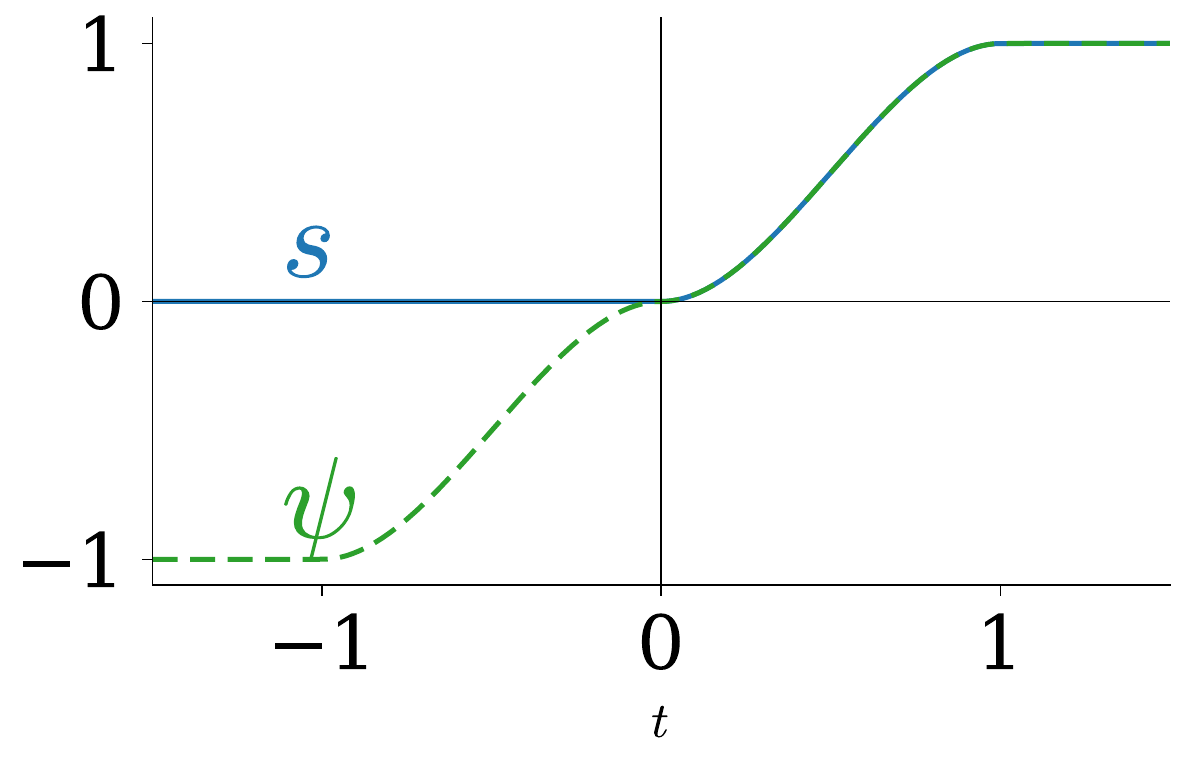}}
\vspace{-8pt}\captionof{figure}{plot of $s$ and $\psi$.}
    \label{fig:s-psi}
\end{minipage}
\end{center}
Throughout the proof we will use that fact that $0\leq s',\psi'\leq\frac{3}{2}$.
For a block $\bz\in\reals^n$, we define
\[
\bar{\psi}(\bz):=\frac{1}{n}\sum_{j\in[n]}\psi(z_j)~,
\qquad
\widetilde{\psi}(\bz):=\frac{1}{n}\sum_{j\in[n]}\psi(z_j)^2~,
\]
and also let
\begin{align*}
h(\bz)&:=\left[\mathrm{median}_{j\in[n]}\left(\theta_j-z_j\right)\right]_{[-1,1]}~,
\qquad \theta_j:=\frac{j-(n+1)/2}{10^4n\sqrt{B}}~,
\end{align*}
where $[a]_{[-1,1]}:=\min\{1,\,\max\{-1,\,a\}\}$ denotes the clipping to $[-1,1]$.

We consider the (unscaled) hard function $F:\reals^d\to\reals$ to be
\begin{align*}
F(\bx)&:=\left(\left[h(\bx^1)-2s\left(16\widetilde{\psi}(\bx^1)-1\right)+\frac{3}{4}\right]_{+}^2
+\sum_{i=2}^{B}\left[h(\bx^i)+s\left(32\widetilde{\psi}(\bx^{i-1})-1\right)-2s\left(16\widetilde{\psi}(\bx^i)-1\right)-\frac{1}{4}\right]_{+}^2
\right)^{1/2}
\\&~~~~~~~-100\sum_{i\in[B]}\bar{\psi}(\bx^i)
~.
\end{align*}
Equivalently, it will be useful to denote
\begin{align*}
 \sigma_1(\bx)&:=h(\bx^1)-2s\bigl(16\widetilde{\psi}(\bx^1)-1\bigr)+\frac{3}{4}~,\\
 \sigma_i(\bx)&:=h(\bx^i)+s\bigl(32\widetilde{\psi}(\bx^{i-1})-1\bigr)
 -2s\bigl(16\widetilde{\psi}(\bx^i)-1\bigr)-\frac{1}{4}~,
 \qquad i\in\{2,\dots,B\},
 \\\bar{\Psi}(\bx)&:=\sum_{i\in[B]}\bar{\psi}(\bx^i)~,
\end{align*}
and observe that
\begin{equation} \label{eq: F as sigma}
F(\bx)=\norm{[\bsigma(\bx)]_+}-100\bar{\Psi}(\bx)~,
\end{equation}
where the ReLU function $[\,\cdot\,]_+$ is applied coordinate-wise. Intuitively, the construction consists of three components: $(1)$ a shifted median $h$ ensuring that coordinates are revealed sequentially within each block; $(2)$ smoothstep terms $s$ ensuring that blocks are revealed sequentially; $(3)$ a term $-100\bar{\Psi}$ resulting a non-negligible gradient direction preventing Goldstein stationarity until all blocks are non-zero.

\begin{lemma}\label{lem: F Lip}
The function $F$ is $(49+150\sqrt{B/n})$-Lipschitz, 
and satisfies $F(\bzero)-\inf F\leq 200B$.
\end{lemma}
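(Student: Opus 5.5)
The proof splits $F=\norm{[\bsigma]_+}-100\bar\Psi$ as in \eqref{eq: F as sigma} and bounds the two parts separately, for both the Lipschitz constant and the suboptimality gap.

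\textbf{The $\bar\Psi$ term.} Its Lipschitz constant is immediate. Since $0\le\psi'\le\frac32$ and the blocks are disjoint, $\nabla\bar\Psi(\bx)$ has coordinates $\psi'(x_j)/n\in[0,\frac{3}{2n}]$. Hence $\norm{\nabla\bar\Psi}\le\frac{3}{2n}\sqrt{Bn}=\frac32\sqrt{B/n}$, and $100\bar\Psi$ is $150\sqrt{B/n}$-Lipschitz. This accounts for the second summand in the claimed constant.

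\textbf{The $\norm{[\bsigma]_+}$ term.} The map $\bv\mapsto\norm{[\bv]_+}$ is $1$-Lipschitz from $(\reals^B,\norm{\cdot})$. So it suffices to show that $\bsigma:\reals^d\to\reals^B$ is $49$-Lipschitz, meaning $\norm{\bsigma(\bx)-\bsigma(\by)}\le 49\norm{\bx-\by}$. I would decompose $\bsigma=\mathbf{H}+\mathbf{S}$ by component type.

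\emph{The median part.} Here $\mathbf{H}(\bx)_i=h(\bx^i)$. The median of $(\theta_j-z_j)_j$ is $1$-Lipschitz with respect to $\norm{\cdot}_\infty$, because each order statistic is, and clipping preserves this. Therefore $|h(\bx^i)-h(\by^i)|\le\norm{\bx^i-\by^i}$. Summing squares over blocks gives that $\mathbf{H}$ is $1$-Lipschitz.

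\emph{The smoothstep parts.} Each $\widetilde\psi$ has gradient with entries $2\psi(z_j)\psi'(z_j)/n$. This is where I expect the main subtlety: the naive bound $\norm{\nabla\widetilde\psi}\le 3/\sqrt n$ is small, but it must be multiplied by $32\cdot\frac32$ or $16\cdot 2\cdot\frac32$, and the resulting constant must stay dimension-free in $B$. The way around this is that each $\widetilde\psi(\bx^i)$ enters only $\sigma_i$ and $\sigma_{i+1}$. Then, writing $\Delta_i=\norm{\bx^i-\by^i}$,
\[
|\sigma_i(\bx)-\sigma_i(\by)|\le \Delta_i+48\cdot\tfrac{3}{\sqrt n}\Delta_i+48\cdot\tfrac{3}{\sqrt n}\Delta_{i-1}.
\]
Here $2\cdot16\cdot\frac32=48$ and $32\cdot\frac32=48$. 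Using $n\ge 99$ and the triangle inequality in $\ell_2$ over $i$ (the shifted sequence $\Delta_{i-1}$ has the same $\ell_2$ norm), $\bsigma$ is Lipschitz with constant at most $1+2\cdot144/\sqrt{99}<49$. A crude version avoiding $n$ uses $|\psi\psi'|\le\frac32$ and $\norm{\nabla\widetilde\psi}\le\frac{3}{\sqrt n}\le 1$, giving $1+48+48$, which is too big. So the $\sqrt n$ factor, or a sharper constant, is where care is needed. I would calibrate the estimate against the claimed $49$, for instance by $\norm{\nabla\widetilde\psi(\bz)}\le\frac{2}{n}\cdot\frac32\norm{\mathbf 1}=3/\sqrt n\le 0.31$.

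\textbf{The suboptimality gap.} At $\bzero$ we have $h(\bzero)=\mathrm{median}_j\theta_j=0$ since $n$ is odd, and $\widetilde\psi(\bzero)=0$, so $s(-1)=0$. This gives $\sigma_1(\bzero)=\frac34$ and $\sigma_i(\bzero)=-\frac14$ for $i\ge2$, hence $F(\bzero)=\frac34$. For the infimum, note $\norm{[\bsigma]_+}\ge0$ and $\bar\Psi\le B$ because $|\psi|\le1$. So $\inf F\ge-100B$, and therefore $F(\bzero)-\inf F\le\frac34+100B\le200B$.
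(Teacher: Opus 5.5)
Your proposal is correct and follows the paper's proof essentially step by step. You use the same split via \eqref{eq: F as sigma}, the same bound $1+288/\sqrt{n}\le 49$ for $\bsigma$ (you make explicit the block/index-shift argument the paper leaves implicit), the same $\frac{3}{2}\sqrt{B/n}$ bound for $\bar{\Psi}$ (from a coordinatewise gradient bound rather than the paper's Cauchy--Schwarz over block differences, which is equivalent), and the same computation $F(\bzero)=\frac34$, $\inf F\ge -100B$.
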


\begin{proof}[Proof of Lemma~\ref{lem: F Lip}]
It is easy to verify that $\bx\mapsto(h(\bx^i))_{i\in[B]}$ is $1$-Lipschitz, that $\bx\mapsto(\widetilde{\psi}(\bx^i))_{i\in[B]}$ is $(3/\sqrt{n})$-Lipschitz, thus also $1$-Lipschitz, and that each of the maps
$\bz\mapsto 2s\bigl(16\widetilde{\psi}(\bz)-1\bigr),~\bz\mapsto s\bigl(32\widetilde{\psi}(\bz)-1\bigr)$
has Lipschitz constant at most $48\cdot3/\sqrt{n}$. Hence $\bsigma:\reals^d\to\reals^B$ has Lipschitz constant bounded by $1+288/\sqrt{n}\leq49$, and therefore so does $\bx\mapsto \|[\bsigma(\bx)]_{+}\|$.
To bound the additional term in \eqref{eq: F as sigma}, a straightforward calculation shows that $|\psi'(t)|\leq\frac{3}{2}$, hence $\psi$ is $\frac{3}{2}$-Lipschitz, and therefore $\bar{\psi}$ is $\frac{3}{2\sqrt{n}}$-Lipschitz. By orthogonality of different blocks, we get that for any $\bx,\by\in\reals^d:$ 
\begin{align*}
\left|\sum_{i\in[B]}\bar{\psi}(\bx^i)-\sum_{i\in[B]}\bar{\psi}(\by^i)\right|
&\leq \sum_{i\in[B]}\left|\bar{\psi}(\bx^i)-\bar{\psi}(\by^i)\right|
\leq \frac{3}{2\sqrt{n}}\sum_{i\in[B]}\|\bx^i-\by^i\|
\leq  \frac{3}{2\sqrt{n}}\sqrt{B}\left(\sum_{i\in[B]}\|\bx^i-\by^i\|^2\right)^{1/2}
\\&=\frac{3}{2}\sqrt{\frac{B}{n}}\|\bx-\by\|~.
\end{align*}
Overall, we see that $F$ has Lipschitz constant of at most $49+100\cdot \frac{3}{2}\sqrt{B/n}$.

To bound the suboptimality at zero, first note that $h(\bzero)=\bar{\psi}(\bzero)=\widetilde{\psi}(\bzero)=0$, so $F(\bzero)=\frac{3}{4}$.
Further noting that $\psi\leq 1$ and so $\bar{\psi}\leq 1$,
by \eqref{eq: F as sigma} it is clear that $F\geq 0-100B$. Overall, $F(\bzero)-\inf F\leq \frac{3}{4}+100B<200B$.

\end{proof}

\begin{lemma} \label{lem: zero chain}
Let $\bx\in\reals^d$ and $i\in[B]$.
It holds that:
\begin{enumerate}[label=(\roman*)]
    \item If $\supp(\bx^i)\subseteq J\subsetneq[n]$, then there exists $j^*\notin J$ such that $\supp(\bg^i)\subseteq J\cup\{j^*\}$ for all $\bg\in\partial F(\bx)$.
    \item If $\bx^i=\bzero$ and $|\supp(\bx^{i-1})|\leq\frac{n}{32}$ for some $i\geq 2$, then $\bg^{i}=\bzero$ for all $\bg\in\partial F(\bx)$.
\end{enumerate}
Thus, if a zero-respecting algorithm is applied to $F$ for $T\leq\frac{d}{64}$ iterations, all of its iterates $\bx_1,\dots,\bx_T$ must satisfy $\bx_t^{B}=\bzero$.
\end{lemma}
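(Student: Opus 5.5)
The proof follows the chain rule for Clarke subdifferentials applied to the representation \eqref{eq: F as sigma}. Every $\bg\in\partial F(\bx)$ can be written as
\[
\bg=\sum_{k\in[B]}\lambda_k\,\bu_k-100\nabla\bar\Psi(\bx),
\]
where $\boldsymbol{\lambda}$ is a subgradient of $\bv\mapsto\|[\bv]_+\|$ at $\bsigma(\bx)$, so $\lambda_k\ge 0$ and $\lambda_k=0$ whenever $\sigma_k(\bx)<0$. Each $\bu_k\in\partial\sigma_k(\bx)$. Since $\sigma_k$ depends only on blocks $k-1$ and $k$, the block-$i$ component of $\bg$ gets contributions only from $\sigma_i$ (through $h(\bx^i)$ and $s(16\widetilde\psi(\bx^i)-1)$), from $\sigma_{i+1}$ (through $s(32\widetilde\psi(\bx^i)-1)$), and from $-100\nabla\bar\psi(\bx^i)$. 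Because $\psi'(0)=0$ and $\psi$ is $C^1$, both $\nabla\bar\psi$ and $\nabla\widetilde\psi$ at $\bx^i$ are supported on $\supp(\bx^i)$. Also, $s'(t)=0$ for $t\le 0$, so the $s$-terms contribute nothing when $\widetilde\psi$ is small. The only coordinates that can escape $\supp(\bx^i)$ are therefore those produced by $\partial h(\bx^i)$.

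For (i), I will show that $\partial h(\bz)$ is supported on $J\cup\{j^*\}$ for a single index $j^*\notin J$ that does not depend on the subgradient chosen. The Clarke subdifferential of the clipped median is contained in the convex hull of $\{-\e_j : j \text{ attains the median}\}$ (possibly with $\bzero$ from the clipping). Since $\bz_j=0$ for $j\notin J$, the values $\theta_j-z_j$ for $j\notin J$ equal $\theta_j$, and these are distinct because the $\theta_j$ are distinct. The ties that matter at the median are therefore among indices in $J$ together with at most one index outside $J$: two indices $j,j'\notin J$ cannot both attain the median value, since $\theta_j\ne\theta_{j'}$. Handling "attains" in the limiting (Clarke) sense needs care. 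Nearby differentiable points $\by\to\bz$ have a unique median index, and any limit of such indices $j\notin J$ must have $\theta_j=\mathrm{median}(\theta-\bz)$, which determines $j$ uniquely among indices outside $J$. I take this $j^*$; if no such $j$ exists, any $j^*\notin J$ works. Combining this with the support facts above gives $\supp(\bg^i)\subseteq J\cup\{j^*\}$, and the set $J\cup\{j^*\}$ is the same for all $\bg\in\partial F(\bx)$.

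For (ii), suppose $\bx^i=\bzero$, so $h(\bx^i)=[\mathrm{median}_j\theta_j]_{[-1,1]}=\theta_{(n+1)/2}=0$ ($n$ is odd) and $\widetilde\psi(\bx^i)=0$. Since $|\psi|\le1$ and $|\supp(\bx^{i-1})|\le n/32$, we get $\widetilde\psi(\bx^{i-1})\le 1/32$. Hence $s(32\widetilde\psi(\bx^{i-1})-1)=0$, and this holds in a neighborhood, so the relevant derivative vanishes as well. This gives $\sigma_i(\bx)=0+0-2s(-1)-\tfrac14=-\tfrac14<0$, so by continuity $\sigma_i<0$ near $\bx$ and $\lambda_i=0$. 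The remaining block-$i$ contributions are:
\begin{itemize}
\item from $\sigma_{i+1}$: $s'(32\widetilde\psi(\bx^i)-1)=s'(-1)=0$;
\item from $\bar\Psi$: $\nabla\bar\psi(\bzero)=\psi'(0)\bone/n=\bzero$.
\end{itemize}
Hence $\bg^i=\bzero$.

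For the conclusion, I will argue by induction over iterations. With $\bx_0=\bzero$, each oracle call can add at most one new coordinate in the active block by (i). A new block $i$ can be touched only once $|\supp(\bx^{i-1})|>n/32$ by (ii). Since (i) applies to every block at once, each query could add one coordinate in several blocks simultaneously. I will handle this by tracking, for each block, the number of iterations since it became active: block $i$ opens only after block $i-1$ has been active for at least $n/32$ iterations. So reaching block $B$ takes at least $(B-1)n/32\ge Bn/64=d/64$ iterations (using $B\ge2$). I expect the main obstacle to be the Clarke-subdifferential analysis of the clipped median in (i), specifically making rigorous that the limiting gradients select at most one fresh index $j^*$.
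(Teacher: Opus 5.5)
Your proposal is correct and follows essentially the same route as the paper. The shared steps are:
\begin{itemize}
\item the Clarke chain rule applied to $\|[\bsigma(\bx)]_+\|-100\bar\Psi(\bx)$;
\item the observation that coordinates outside $\supp(\bx^i)$ can only be activated through $\partial h(\bx^i)$, and that at most one fresh index can attain the median because the $\theta_j$ are distinct;
\item the computation $\sigma_i(\bx)=-\tfrac14<0$, which kills block $i$ in (ii);
\item the per-block counting, where each gap is an integer exceeding $n/32$ (since $n$ is odd), giving $(B-1)(\lfloor n/32\rfloor+1)>Bn/64$ as in the paper.
\end{itemize}
Your limiting-gradient argument that $\partial h(\bz)\subseteq\Gcal(\bz)$ only picks up median-attaining indices is a welcome detail that the paper simply asserts.
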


\begin{proof}[Proof of Lemma~\ref{lem: zero chain}]

Note that by definition of $h$, for any $\bz\in\reals^n:$
\begin{equation} \label{eq: Gh form}
\partial h(\bz)\subseteq \Gcal(\bz):=\begin{cases}
    \conv\{-\be_j~:~h(\bz)=\theta_j-z_j\}, & |h(\bz)|<1\\
    \conv\left(\{-\be_j~:~h(\bz)=\theta_j-z_j\}\cup\{\bzero\}\right), & |h(\bz)|=1
\end{cases}
~.
\end{equation}
Further using the facts that $\partial (f_1+f_2)\subseteq\partial f_1+\partial f_2$ and $\partial(f_1\circ f_2)\subseteq\conv\{g_1\bg_2:g_1\in\partial f_1(f_2(\bx)),\bg_2\in\partial f_2(\bx)\}$ \citep[Proposition 2.3.3, Theorem 2.3.9]{Clarke-1990-Optimization}
we get via direct differentiation that any $\bg\in\partial F(\bx)$ has the block form
\begin{align}
&\bg^i=w_i(\bx) \bu^i+32\left[w_{i+1}(\bx)s'\bigl(32\widetilde{\psi}(\bx^i)-1\bigr)
-w_i(\bx) s'\bigl(16\widetilde{\psi}(\bx^i)-1\bigr)\right]
\nabla\widetilde{\psi}(\bx^i)
-100\nabla\bar{\psi}(\bx^i)
\label{eq: g form}
\\
&\text{where}~~
\bu^i\in\Gcal(\bx^i),
\qquad
\bw(\bx)\in
\partial\Phi(\bsigma(\bx))\subseteq\{\ba\in[0,1]^B:\|\ba\|\leq1\},
\qquad \Phi(\ba):=\|[\ba]_+\|
~.\nonumber
\end{align}
Here $w_{B+1}(\bx):=0$, and $w_i(\bx)=0$ whenever $\sigma_i(\bx)<0$; if $[\bsigma(\bx)]_+\neq\bzero$, then $\bw(\bx)=[\bsigma(\bx)]_+/\|[\bsigma(\bx)]_+\|$.

We claim that for all $\bx\in\reals^d,~i\in[B],~j\in[n]:$
\begin{align} \label{eq: gij=0 condition}
x^i_j=0\text{~~and~~}\left(\sigma_i(\bx)<0\text{~~or~~}\theta_j-x^i_j\neq h(\bx^i)\right)
~~~\implies~~~ \forall\bg\in\partial F(\bx):~g^i_j=0 ~.
\end{align}
Indeed, looking at \eqref{eq: g form} and noting that $\psi'(0)=0$, we see that the second and third terms which depend on $\nabla\widetilde{\psi},\nabla\bar{\psi}$ vanish when $x^i_j=0$. For the remaining first term $w_i(\bx)\bu^i$, it holds that either $\sigma_i(\bx)<0$ which implies $w_i(\bx)=0$, or else $\theta_j-x^i_j\neq h(\bx^i)$ and so $u^i_j=0$
for all $\bu^i\in\Gcal(\bx^i)$
since this coordinate is not active according to \eqref{eq: Gh form}. In either case, $w_i(\bx)u^i_j=0$ thus proving \eqref{eq: gij=0 condition}.

To prove $(i)$, fix $i\in[B]$ and suppose $\supp(\bx^i)\subseteq J\subsetneq[n]$. Since for any $j\notin J$ it holds that $x^i_j=0$, and by noting that $(\theta_j)_{j\in[n]}\subset(-1,1)$ are distinct, we see that there is at most one $j\notin J$ attaining $h(\bx^i)=\theta_{j}-x^i_{j}$. Let $j^*\notin J$ be this index (if it is not attained by any such index, pick arbitrarily).
We get that for any $j\notin J\cup\{j^*\}$ both $x^i_j=0$ and $\theta_j-x^i_j\neq h(\bx^i)$, hence $g^i_j= 0$ for all $\bg\in\partial F(\bx)$ by \eqref{eq: gij=0 condition}, proving that $\supp(\bg^i)\subseteq J\cup\{j^*\}$ as claimed.

To prove $(ii)$, suppose $\bx^i=\bzero$ and $|\supp(\bx^{i-1})|\leq\frac{n}{32}$. Since $\psi(0)=0$ and $\psi^2\leq 1$, it holds that
\begin{align*}
&\widetilde{\psi}(\bx^{i-1})=\frac{1}{n}\sum_{j\in[n]}\psi(x^{i-1}_j)^2\leq \frac{|\supp(\bx^{i-1})|}{n}\leq \frac{1}{32}
\\ \implies~~ & 32\widetilde{\psi}(\bx^{i-1})-1\leq 0
\\ \implies~~ & s(32\widetilde{\psi}(\bx^{i-1})-1)=0~.
\end{align*}
Further note that $h(\bx^i)=\widetilde{\psi}(\bx^i)=0$, so overall
\[
\sigma_i(\bx)=\underset{=0}{\underbrace{h(\bx^i)}}+\underset{=0}{\underbrace{s\bigl(32\widetilde{\psi}(\bx^{i-1})-1\bigr)}}
 -\underset{=0}{\underbrace{2s\bigl(16\widetilde{\psi}(\bx^i)-1\bigr)}}-\frac{1}{4}
<0
 ~.
\]
Thus, by \eqref{eq: gij=0 condition} we get that $\bg^i=\bzero$ for all $\bg\in\partial F(\bx)$ as claimed.

We now prove the final conclusion.
By the zero-respecting property, item $(i)$, applied to the cumulative set of coordinates revealed in a block, shows that this set can grow by at most one index at each iteration. Hence, for the support within a block to grow beyond $\frac{n}{32}$, any zero-respecting algorithm requires at least $\lfloor n/32\rfloor+1$ iterations after that block first becomes accessible. Moreover, item $(ii)$ further shows that block $i$ cannot first become accessible unless $|\supp(\bx^{i-1})|>\frac{n}{32}$. Therefore, to get to a point $\bx$ such that its final block $\bx^B\neq\bzero$, any zero-respecting algorithm requires a number of iterations which is at least $(B-1)(\lfloor n/32\rfloor+1)>(B-1)\frac{n}{32}\geq\frac{Bn}{64}$, where the last inequality uses $B\geq2$.

\end{proof}

\begin{lemma} \label{lem: g<0}
    For any $\bx\in\reals^d,~\bg\in\partial F(\bx):~\bg\vleq-4\nabla\bar{\Psi}(\bx) \vleq\bzero$.
\end{lemma}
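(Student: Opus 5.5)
We work directly from the block form \eqref{eq: g form} and bound each of its three terms coordinatewise. Note first that $\nabla\bar{\Psi}(\bx)$ has block $i$ equal to $\nabla\bar{\psi}(\bx^i)$, with coordinates $\frac{1}{n}\psi'(x^i_j)\geq 0$. So the second inequality $-4\nabla\bar{\Psi}(\bx)\vleq\bzero$ is immediate from $\psi'\geq0$. For the first inequality, we fix $i\in[B]$ and $j\in[n]$ and show $g^i_j\leq -\frac{4}{n}\psi'(x^i_j)$.

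The first term $w_i(\bx)u^i_j$ is nonpositive. This holds because $w_i(\bx)\in[0,1]$ and every $\bu^i\in\Gcal(\bx^i)$ is a convex combination of vectors $-\be_j$ and $\bzero$, so $\bu^i\vleq\bzero$. The third term equals exactly $-\frac{100}{n}\psi'(x^i_j)$. It therefore suffices to show that the middle term is at most $\frac{96}{n}\psi'(x^i_j)$.

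For the middle term, write $\nabla\widetilde{\psi}(\bx^i)_j=\frac{2}{n}\psi(x^i_j)\psi'(x^i_j)$. Then the term equals
\[
\frac{64}{n}\psi(x^i_j)\psi'(x^i_j)\left[w_{i+1}(\bx)s'\bigl(32\widetilde{\psi}(\bx^i)-1\bigr)-w_i(\bx)s'\bigl(16\widetilde{\psi}(\bx^i)-1\bigr)\right].
\]
We bound the bracket in absolute value. Both $w$'s lie in $[0,1]$ and $0\leq s'\leq\frac32$, so the two summands have opposite signs and each has magnitude at most $\frac32$. Hence the bracket has absolute value at most $\frac32$. Combined with $|\psi|\leq1$ and $\psi'\geq0$, the middle term is at most $\frac{64}{n}\cdot\frac32\,\psi'(x^i_j)=\frac{96}{n}\psi'(x^i_j)$. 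Adding the three bounds gives $g^i_j\leq(96-100)\frac{1}{n}\psi'(x^i_j)=-\frac4n\psi'(x^i_j)$, which is the claim.

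The only delicate step is the crude bound on the bracket. It works because the two $s'$-contributions enter with opposite signs, so their difference never exceeds $\max s'=\frac32$ in magnitude rather than $3$. The constant $100$ was chosen precisely to dominate $64\cdot\frac32=96$. We also need \eqref{eq: g form} to describe all Clarke subgradients, including the convex hull. This is already built in, since the chain-rule inclusion quoted from Clarke is a convex hull of such products and the coordinatewise inequality is preserved under convex combinations. The case $w_{B+1}=0$ for $i=B$ is covered by the same bound.
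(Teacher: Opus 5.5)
Your proof is correct and follows essentially the same route as the paper's. Both bound the coefficient of $\nabla\widetilde{\psi}(\bx^i)$ by $32\cdot\frac{3}{2}=48$ in absolute value using $0\leq s'\leq\frac{3}{2}$ and $w_i,w_{i+1}\in[0,1]$. Both use $|\psi|\leq 1$ to obtain $|\nabla\widetilde{\psi}|\vleq 2\nabla\bar{\psi}$ (which you write out coordinatewise), note that $w_i(\bx)\bu^i\vleq\bzero$, and let $-100\nabla\bar{\psi}$ absorb the resulting $96\nabla\bar{\psi}$; your added remark that the coordinatewise bound survives the convex hull in the chain rule is a harmless extra bit of care.
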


\begin{proof}[Proof of Lemma~\ref{lem: g<0}]
The fact that $-4\nabla\bar{\Psi}(\bx)\vleq \bzero$ is easily seen by recalling that $\bar{\Psi}$ increases in each coordinate, and so it remains to prove the inequality $\bg\vleq -4\nabla\bar{\Psi}(\bx)$.
Let $\bg\in\partial F(\bx)$ and recall its block form given by \eqref{eq: g form}. Denote $a_i:=32[w_{i+1}(\bx)s'\bigl(32\widetilde{\psi}(\bx^i)-1\bigr)
-w_i(\bx) s'\bigl(16\widetilde{\psi}(\bx^i)-1\bigr)]$, i.e., the coefficient of $\nabla\widetilde{\psi}(\bx^i)$.
Since $0\leq s'\leq\frac{3}{2}$ it holds that $|a_i|\leq 48$. Further observe that $\bu^i\vleq 0$ by \eqref{eq: Gh form}, and $\bw(\bx)\vgeq0$, so $w_i(\bx)\bu^i\vleq \bzero$.
Lastly, since $|\psi|\leq 1$ we get that $|\nabla\widetilde{\psi}|\vleq 2\nabla\bar{\psi}$. Overall, plugging into \eqref{eq: g form} we obtain
\begin{equation} \label{eq: g<-4}
\bg^i~\vleq~ \bzero+48\nabla\widetilde{\psi}(\bx^i)-100\nabla\bar{\psi}(\bx^i)
~\vleq~ 48|\nabla\widetilde{\psi}(\bx^i)|-100\nabla\bar{\psi}(\bx^i)
~\vleq~ -4\nabla\bar{\psi}(\bx^i)~,
\end{equation}
hence $\bg\vleq -4\nabla\bar{\Psi}(\bx)$.

\end{proof}

\begin{lemma} \label{lem: gi1>=}
    For any $\bx\in\reals^d,~\bg\in\partial F(\bx),~i\in[B]$, if $[\bsigma(\bx)]_+\neq \bzero$ then
    $-\sum_{j\in[n]}g^i_j\geq \frac{[\sigma_i(\bx)]_+}{\|[\bsigma(\bx)]_+\|}$ .
\end{lemma}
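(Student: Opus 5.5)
The plan is to read off the block sum directly from the block form \eqref{eq: g form}, splitting the terms into a ``$\bu^i$ part'' and a ``$\widetilde\psi,\bar\psi$ part'' and treating the two cases $\sigma_i(\bx)\le 0$ and $\sigma_i(\bx)>0$ separately. First I would note that since $[\bsigma(\bx)]_+\neq\bzero$, the map $\Phi(\ba)=\|[\ba]_+\|$ is differentiable at $\bsigma(\bx)$. Hence $\bw(\bx)=[\bsigma(\bx)]_+/\|[\bsigma(\bx)]_+\|$ is uniquely determined, and the claimed right-hand side is exactly $w_i(\bx)$. Writing $a_i$ as in the proof of Lemma~\ref{lem: g<0}, summing \eqref{eq: g form} over $j$ gives
\[
-\sum_{j\in[n]}g^i_j=-w_i(\bx)\sum_{j\in[n]}u^i_j+\sum_{j\in[n]}\Bigl(100\,\partial_j\bar\psi(\bx^i)-a_i\,\partial_j\widetilde\psi(\bx^i)\Bigr).
\]
The proof of Lemma~\ref{lem: g<0} (namely $|a_i|\le 48$ and $|\nabla\widetilde\psi|\vleq 2\nabla\bar\psi$) already shows that each summand of the second sum is at least $4\,\partial_j\bar\psi(\bx^i)\ge 0$. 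So it suffices to show $-w_i(\bx)\sum_j u^i_j\ge w_i(\bx)$.

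\emph{Case $\sigma_i(\bx)\le 0$.} Then $w_i(\bx)=0$ and the claim reduces to the nonnegativity of the second sum. Equivalently, it follows directly from Lemma~\ref{lem: g<0}.

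\emph{Case $\sigma_i(\bx)>0$.} Here I need $\sum_j u^i_j=-1$. By \eqref{eq: Gh form}, this holds whenever $|h(\bx^i)|<1$, since $\bu^i$ is then a convex combination of vectors $-\be_j$. The main step is to rule out $|h(\bx^i)|=1$ when $\sigma_i(\bx)>0$.

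If $h(\bx^i)=-1$, then $\sigma_1\le -1+\frac34<0$, and for $i\ge2$ we have $\sigma_i\le -1+1-0-\frac14<0$. Both contradict $\sigma_i(\bx)>0$.

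If $h(\bx^i)=1$, then the median of $\theta_j-x^i_j$ is at least $1$. So at least $(n+1)/2$ indices satisfy $x^i_j\le\theta_j-1\le -1+\frac{1}{10^4\sqrt B}$. For each such index $\psi(x^i_j)^2=s(|x^i_j|)^2$ is close to $1$ (say at least $0.99$), hence $\widetilde\psi(\bx^i)\ge 0.99\cdot\frac12>\frac18$. This gives $16\widetilde\psi(\bx^i)-1\ge1$ and therefore $s(16\widetilde\psi(\bx^i)-1)=1$. Consequently $\sigma_1\le 1-2+\frac34<0$, and for $i\ge2$ we have $\sigma_i\le 1+1-2-\frac14<0$, again a contradiction.

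Hence in this case $|h(\bx^i)|<1$, so $\sum_j u^i_j=-1$ and the first term equals $w_i(\bx)$. Combining this with the nonnegativity of the second sum proves the lemma.

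I expect the only delicate point to be the quantitative check in the case $h(\bx^i)=1$: one must verify that $\theta_j$ is tiny enough that $s(1-|\theta_j|)^2$ comfortably exceeds $1/4$. This follows from $|\theta_j|\le \frac{1}{2\cdot10^4\sqrt B}$ and the continuity of $s$ near $1$.
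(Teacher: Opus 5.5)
Your proof is correct and follows the paper's argument. You split on the sign of $\sigma_i(\bx)$ and use Lemma~\ref{lem: g<0} when $\sigma_i(\bx)\le 0$; otherwise you show $|h(\bx^i)|<1$, so that $\sum_j u^i_j=-1$ while the remaining terms of \eqref{eq: g form} are nonpositive. The only difference is how you obtain $|h(\bx^i)|<1$: the paper deduces $\widetilde{\psi}(\bx^i)<\frac18$ from $\sigma_i(\bx)>0$ and then applies the counting claim \eqref{eq: tildepsi<1/8}, which it reuses in Lemma~\ref{lem: r function}, whereas you rule out $h=-1$ directly from the formula for $\sigma_i$ and $h=+1$ by showing it forces $\widetilde{\psi}(\bx^i)>\frac18$, which is equally valid.
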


\begin{proof}[Proof of Lemma~\ref{lem: gi1>=}]

First, we claim that for any $\bz\in\reals^n:$
\begin{equation} \label{eq: tildepsi<1/8}
\widetilde{\psi}(\bz)<\frac{1}{8}
~~~\implies~~~
|h(\bz)|<1\text{~~and~~}
\|\nabla\bar{\psi}(\bz)\|\geq \frac{[|h(\bz)|-\frac{1}{10^4\sqrt{B}}]_+}{2\sqrt{n}}~.
\end{equation}
Indeed, assuming $\widetilde{\psi}(\bz)=\frac{1}{n}\sum_{j\in[n]}\psi(z_j)^2<\frac{1}{8}$, since $\psi(\frac{3}{4})^2>\half$, at most $n/4$ coordinates of $\bz$ can satisfy $|z_j|>\frac{3}{4}$. Noting that $|\theta_j|:=|\frac{j-(n+1)/2}{10^4n\sqrt{B}}|\leq \frac{1}{10^4\sqrt{B}}$, we see that the median $|h(\bz)|<1$, proving the first claim.
Moreover, by definition of the median, at least half of the coordinates satisfy $|z_j|\geq|h(\bz)|-\frac{1}{10^4\sqrt{B}}$, and at least $n/4$ of these coordinates satisfy $|z_j|\leq\frac{3}{4}$. Denote a set of such coordinates by $J$, so $|J|\geq n/4$.
Noting that by direct calculation $\psi'(z_j)\geq |z_j|$ on $[-\frac{3}{4},\frac{3}{4}]$, since $[\nabla\bar{\psi}(\bz)]_j=\frac{\psi'(z_j)}{n}$ we get
\[
\|\nabla\bar{\psi}(\bz)\|
=\sqrt{\sum_{j\in[n]}\frac{\psi'(z_j)^2}{n^2}}
\geq\frac{1}{n}\sqrt{\sum_{j\in J}|z_j|^2}
\geq\frac{\sqrt{|J|}}{n}\left[|h(\bz)|-\frac{1}{10^4\sqrt{B}}\right]_+
\geq\frac{[|h(\bz)|-\frac{1}{10^4\sqrt{B}}]_+}{2\sqrt{n}}~,
\]
proving \eqref{eq: tildepsi<1/8}.

Now, let $\bx\in\reals^d$ such that $[\bsigma(\bx)]_+\neq\bzero$, and let $i\in[B]$. If $[\sigma_i(\bx)]_+=0$, then the lemma immediately follows from Lemma~\ref{lem: g<0}. Otherwise, assume that $[\sigma_i(\bx)]_+>0$. Note that this implies that $\widetilde{\psi}(\bx^i)<\frac{1}{8}$, since otherwise $-s(16\widetilde{\psi}(\bx^i)-1)\leq-1$ and by definition of $\sigma_i(\bx)$ it could not have been positive. Therefore, by \eqref{eq: tildepsi<1/8} it holds that $|h(\bx^i)|<1$, which further implies via \eqref{eq: Gh form} that any $\bu^i\in\Gcal(\bx^i)$ satisfies $\sum_{j\in[n]}u^i_j=-1$.
Recalling the block form of $\bg\in\partial F(\bx)$ given by \eqref{eq: g form}, and noting that
$w_i(\bx)=\frac{[\sigma_i(\bx)]_+}{\|[\bsigma(\bx)]_+\|}$ since $[\sigma_i(\bx)]_+\neq0$, and the non-positivity of the terms established in \eqref{eq: g<-4}, we overall get
\[
\sum_{j\in[n]}g^i_j
\leq w_i(\bx)\sum_{j\in[n]}u^i_j
+0
=-w_i(\bx)
=-\frac{[\sigma_i(\bx)]_+}{\|[\bsigma(\bx)]_+\|}~.
\]

\end{proof}

\begin{lemma} \label{lem: r function}
Define $\br:\reals^d\to\reals^B$ as $r_i(\bx):=[\sigma_i(\bx)-h(\bx^i)]_+$ for all $i\in[B]$. Then for any $\bx,\by\in\reals^d:$
\[
\|[\bsigma(\bx)]_+-\br(\by)\|
\leq
2\sqrt{n}\left\|\nabla\bar{\Psi}(\bx)\right\|+\frac{1}{10^4}+\frac{288\|\bx-\by\|}{\sqrt{n}}
~.
\]

\end{lemma}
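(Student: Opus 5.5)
The plan is to insert the intermediate vector $\br(\bx)$ and apply the triangle inequality:
\[
\|[\bsigma(\bx)]_+-\br(\by)\|\leq\|[\bsigma(\bx)]_+-\br(\bx)\|+\|\br(\bx)-\br(\by)\|~.
\]
The first term measures the effect of dropping the median term $h$ at the fixed point $\bx$. The second term is a pure Lipschitz estimate.

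\emph{Lipschitz term.} By definition, $\sigma_i(\bx)-h(\bx^i)$ consists only of the smoothstep terms and constants. These are $s(32\widetilde{\psi}(\bx^{i-1})-1)$, which is absent for $i=1$, and $-2s(16\widetilde{\psi}(\bx^i)-1)$. As shown in the proof of Lemma~\ref{lem: F Lip}, the block maps $\bz\mapsto 2s(16\widetilde{\psi}(\bz)-1)$ and $\bz\mapsto s(32\widetilde{\psi}(\bz)-1)$ are each $(144/\sqrt{n})$-Lipschitz. Hence each coordinate obeys
\[
|(\sigma_i-h)(\bx)-(\sigma_i-h)(\by)|\leq\tfrac{144}{\sqrt n}\bigl(\|\bx^{i-1}-\by^{i-1}\|+\|\bx^i-\by^i\|\bigr)~.
\]
Squaring, summing over $i$, and using $(a+b)^2\leq 2a^2+2b^2$, every block is counted at most twice. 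This gives an $\ell_2$ bound of $\frac{144}{\sqrt n}\cdot 2\|\bx-\by\|=\frac{288}{\sqrt n}\|\bx-\by\|$. Since the coordinatewise ReLU is $1$-Lipschitz, the same bound holds for $\|\br(\bx)-\br(\by)\|$.

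\emph{Dropping $h$.} I would bound coordinatewise $|[\sigma_i(\bx)]_+-[\sigma_i(\bx)-h(\bx^i)]_+|$, splitting into two cases according to $\widetilde{\psi}(\bx^i)$.

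If $\widetilde{\psi}(\bx^i)\geq\frac18$, then $16\widetilde{\psi}(\bx^i)-1\geq1$, so $2s(\cdot)=2$. Since $|h|\leq1$ and $s\leq1$, both $\sigma_i(\bx)$ and $\sigma_i(\bx)-h(\bx^i)$ are at most $1+1-2-\frac14<0$ for $i\geq2$, and at most $1-2+\frac34<0$ for $i=1$. Hence both ReLUs vanish and the difference is $0$.

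If $\widetilde{\psi}(\bx^i)<\frac18$, the ReLU is $1$-Lipschitz, so the difference is at most $|h(\bx^i)|$. The claim \eqref{eq: tildepsi<1/8} from the proof of Lemma~\ref{lem: gi1>=} then gives
\[
|h(\bx^i)|\leq 2\sqrt{n}\|\nabla\bar{\psi}(\bx^i)\|+\frac{1}{10^4\sqrt{B}}~.
\]
In both cases the coordinate difference is at most $2\sqrt n\|\nabla\bar\psi(\bx^i)\|+\frac{1}{10^4\sqrt B}$. Applying Minkowski's inequality to the sum of these two vectors, and noting that $\nabla\bar{\Psi}(\bx)$ has blocks $\nabla\bar\psi(\bx^i)$, we get
\[
\|[\bsigma(\bx)]_+-\br(\bx)\|\leq 2\sqrt n\|\nabla\bar\Psi(\bx)\|+\sqrt{B}\cdot\frac{1}{10^4\sqrt B}=2\sqrt n\|\nabla\bar\Psi(\bx)\|+\frac{1}{10^4}~.
\]
Combining the two pieces yields the lemma.

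The only delicate step is the case split. The bound on $|h|$ from \eqref{eq: tildepsi<1/8} is available only when $\widetilde{\psi}<\frac18$, so one must check that in the complementary regime both ReLUs are inactive. The remaining work is tracking the Lipschitz constants to obtain exactly $288/\sqrt n$.
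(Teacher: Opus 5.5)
Your proposal is correct and follows essentially the same route as the paper. Both insert $\br(\bx)$ via the triangle inequality. Both split on whether $\widetilde{\psi}(\bx^i)\geq\frac18$ (both ReLUs vanish) or $<\frac18$ (bound $|h(\bx^i)|$ via the median claim), use block orthogonality to get $2\sqrt{n}\|\nabla\bar{\Psi}(\bx)\|+10^{-4}$, and finish with the $288/\sqrt{n}$ Lipschitz bound on $\br$; your bookkeeping of that constant (each block counted at most twice) is just more explicit than the paper's.
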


\begin{proof}
For any $i\in[B]$, the scalar ReLU is $1$-Lipschitz, so
\[
|[\sigma_i(\bx)]_+-r_i(\bx)|\leq |h(\bx^i)|~.
\]
If $\widetilde{\psi}(\bx^i)\geq1/8$, then $2s(16\widetilde{\psi}(\bx^i)-1)=2$, and both $[\sigma_i(\bx)]_+$ and $r_i(\bx)$ vanish. Otherwise, \eqref{eq: tildepsi<1/8} gives
\[
|[\sigma_i(\bx)]_+-r_i(\bx)|
\leq 2\sqrt{n}\|\nabla\bar{\psi}(\bx^i)\|+\frac{1}{10^4\sqrt{B}}~.
\]
Consequently, by the triangle inequality in $\reals^B$ and orthogonality of different blocks,
\[
\|[\bsigma(\bx)]_+-\br(\bx)\|
\leq2\sqrt{n}\|\nabla\bar{\Psi}(\bx)\|+\frac{1}{10^4}~.
\]
The map $\bx\mapsto(\widetilde{\psi}(\bx^i))_{i\in[B]}$ is $3/\sqrt{n}$-Lipschitz, and both scalar functions $x\mapsto s(32x-1)$ and $x\mapsto 2s(16x-1)$ are $48$-Lipschitz, hence $\br$ is $288/\sqrt{n}$-Lipschitz. For $\by\in\reals^d$, another triangle inequality yields
\[
\|[\bsigma(\bx)]_+-\br(\by)\|
\leq2\sqrt{n}\|\nabla\bar{\Psi}(\bx)\|+\frac{1}{10^4}+\frac{288\|\bx-\by\|}{\sqrt{n}}~.
\]
\end{proof}

\begin{lemma} \label{lem: goldstein large}
Let $\delta'\leq\frac{\sqrt{n}}{5000}$.
For any $\bx\in\reals^d$ satisfying $\bx^B=\bzero$, there is a unit vector $\bw'(\bx)\vleq\bzero$ such that for all $\by\in\BB(\bx,\delta'),~\bg\in\partial F(\by):~\inner{\bw'(\bx),\bg}\geq\frac{1}{4\sqrt{n}}$. In particular, $\bx$ is not a $(\delta',\frac{1}{5\sqrt{n}})$-Goldstein stationarity point of $F$.
\end{lemma}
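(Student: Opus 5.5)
The plan is to take $\bw'(\bx)$ constant on each block, with weights given by the function $\br$ of Lemma~\ref{lem: r function}. Concretely, set
\[
\bw'(\bx):=-\frac{1}{\sqrt{n}\,\|\br(\bx)\|}\bigl(r_1(\bx)\bone,\dots,r_B(\bx)\bone\bigr)~,
\]
where each $\bone\in\reals^n$. This vector is a unit vector and satisfies $\bw'\vleq\bzero$, provided $\br(\bx)\neq\bzero$.

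The first step is to show that $\|\br(\bx)\|\geq c_0$ for an absolute constant $c_0$ (something like $1/4$) whenever $\bx^B=\bzero$. I would argue by a chain along the blocks.
\begin{itemize}
\item If $r_1(\bx)=[\tfrac34-2s(16\widetilde{\psi}(\bx^1)-1)]_+$ is small, then $s(16\widetilde{\psi}(\bx^1)-1)\gtrsim 3/8$. Hence $16\widetilde{\psi}(\bx^1)-1$ is bounded away from $0$, so $32\widetilde{\psi}(\bx^1)-1\geq1$ and $s(32\widetilde{\psi}(\bx^1)-1)=1$.
\item Then $r_2(\bx)=[\tfrac34-2s(16\widetilde{\psi}(\bx^2)-1)]_+$, and the same reasoning applies to block $2$.
\item Iterating, either some $r_i$ exceeds a constant, or we reach block $B$. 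There $\widetilde{\psi}(\bx^B)=0$, so $r_B(\bx)=[s(32\widetilde{\psi}(\bx^{B-1})-1)-\tfrac14]_+=\tfrac34$.
\end{itemize}
The constants must be tuned so that ``$r_i$ small'' genuinely forces the next $s(32\cdot)$ term to be close to $1$. The monotone shape of $s$ should give this with room to spare.

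Next, fix $\by\in\BB(\bx,\delta')$ and $\bg\in\partial F(\by)$. Since $\bw'$ is constant on blocks, $\inner{\bw'(\bx),\bg}=\frac{1}{\sqrt n\|\br(\bx)\|}\sum_i r_i(\bx)\bigl(-\sum_j g^i_j\bigr)$, and every term is nonnegative by Lemma~\ref{lem: g<0}. I will lower bound $-\sum_j g^i_j$ in two ways.
\begin{enumerate}[label=(\alph*)]
\item By Lemma~\ref{lem: gi1>=}, $-\sum_j g^i_j\geq[\sigma_i(\by)]_+/\|[\bsigma(\by)]_+\|$.
\item By Lemma~\ref{lem: g<0}, $-\sum_j g^i_j\geq \frac4n\sum_j\psi'(y^i_j)$. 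I then reuse the median argument from the proof of \eqref{eq: tildepsi<1/8}, but in $\ell_1$ rather than $\ell_2$. If $\widetilde\psi(\by^i)<1/8$, at least $n/4$ coordinates satisfy $\tfrac34\geq|y^i_j|\geq|h(\by^i)|-10^{-4}B^{-1/2}$, and $\psi'(t)\geq|t|$ on that range. This gives $-\sum_j g^i_j\geq [|h(\by^i)|-10^{-4}B^{-1/2}]_+$.
\end{enumerate}
Bound (b) is the point of the construction. Lemma~\ref{lem: r function} loses a factor $\sqrt n$ when passing through $\|\nabla\bar\Psi\|_2$, whereas the $\ell_1$ bound recovers $|h(\by^i)|$ with no $\sqrt n$ loss. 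That is exactly the discrepancy between $[\sigma_i(\by)]_+$ and $r_i(\by)$.

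The main step is to combine (a) and (b). I would use $-\sum_jg^i_j\geq\max\{\text{(a)},\text{(b)}\}\geq\tfrac12(\text{(a)}+\text{(b)})$, and aim to show
\[
\sum_i r_i(\bx)\left(\frac{[\sigma_i(\by)]_+}{\|[\bsigma(\by)]_+\|}+|h(\by^i)|\right)\gtrsim\|\br(\bx)\|~,
\]
which yields $\inner{\bw',\bg}\geq\frac{1}{4\sqrt n}$ after tracking constants. To prove it, write $[\sigma_i(\by)]_+\geq r_i(\by)-|h(\by^i)|$ and $\|\br(\by)-\br(\bx)\|\leq 288\delta'/\sqrt n\leq 0.06$, using the Lipschitz bound from Lemma~\ref{lem: r function} together with $\delta'\leq\sqrt n/5000$. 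Also $\|[\bsigma(\by)]_+\|\leq\|\br(\by)\|+\|h\|$. Set $\eta:=\|(h(\by^i))_i\|$.
\begin{itemize}
\item If $\eta$ is small compared to $c_0$, the first term is essentially $\inner{\br(\bx),\br(\by)}/\|\br(\by)\|\approx\|\br(\bx)\|$.
\item If $\eta$ is large, the second term contributes $\sum_i r_i|h_i|$. This is not automatically large, because $h$ may be concentrated on blocks where $r_i(\bx)=0$. However, blocks with $r_i(\bx)=0$ and $\widetilde\psi<1/8$ still feed into the numerator only through $h$, so their effect on $\|[\bsigma(\by)]_+\|$ is controlled by a Cauchy--Schwarz argument with weights $r_i$. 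The clipping errors $10^{-4}B^{-1/2}$ sum to at most $10^{-4}$ in norm.
\end{itemize}
I expect this case analysis, that is, bounding the cosine between $\br(\bx)$ and $[\bsigma(\by)]_+$ while absorbing the $h$-perturbation, to be the main obstacle. If the naive version fails, I would first try splitting the blocks according to whether $|h(\by^i)|\geq\tfrac12 r_i(\by)$.

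Finally, the Goldstein conclusion follows by averaging. For any convex combination $\bar\bg$ of such subgradients, $\|\bar\bg\|\geq\inner{\bw',\bar\bg}\geq\frac1{4\sqrt n}>\frac1{5\sqrt n}$.
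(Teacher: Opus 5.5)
\textbf{The gap is the choice of test vector.} A $\bw'$ that is block-constant with weights $r_i(\bx)$ alone does not satisfy the inequality. So the obstacle you flag cannot be removed by any case analysis; here is a concrete failure.

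Let $t^*\approx0.0885$ solve $2s(16t^*-1)=\frac34$, so $\frac1{16}<t^*<\frac18$. For each $i<B$, let $\bx^i$ have $\frac{n+1}{2}$ coordinates equal to $-a$ and the rest zero, with $a\approx0.45$ chosen so that $\widetilde{\psi}(\bx^i)=\frac{n+1}{2n}\psi(a)^2=t^*$. Let $\bx^B=\bzero$.
\begin{itemize}
\item Then $r_i(\bx)=0$ for $i<B$ and $r_B(\bx)=\frac34$. Your vector is therefore $-\frac{1}{\sqrt n}\bone$ on block $B$ and $\bzero$ elsewhere.
\item However, $\sigma_i(\bx)=h(\bx^i)\geq a-\frac{1}{10^4\sqrt B}>0.4$ for every $i<B$, so $\|[\bsigma(\bx)]_+\|\geq0.4\sqrt{B-1}$.
\item $F$ is $C^1$ near this $\bx$: no median ties, all $\sigma_i>0$, and $s$ is $C^1$. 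Also $\nabla\widetilde{\psi}(\bzero)=\nabla\bar{\psi}(\bzero)=\bzero$, and $h(\bz)=-z_{(n+1)/2}$ near $\bzero$. Hence $\bg^B=-w_B(\bx)\be_{(n+1)/2}$.
\end{itemize}
Already at $\by=\bx$ this gives
\[
\inner{\bw',\bg}=\frac{w_B(\bx)}{\sqrt n}=\frac{3/4}{\sqrt n\,\|[\bsigma(\bx)]_+\|}\leq\frac{2}{\sqrt{n(B-1)}}~,
\]
which is below $\frac{1}{4\sqrt n}$ for all $B\geq66$. That is exactly the regime the theorem needs.

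Your $\ell_1$ bound (b) is correct but gives nothing here, because $h(\bx^B)=0$. The inflation of $\|[\bsigma]_+\|$ comes entirely from blocks with $r_i(\bx)=0$, where your vector has no mass. So no Cauchy--Schwarz argument weighted by $r_i$ can detect it.

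\textbf{The missing idea is to put mass on those blocks through $\nabla\bar\Psi(\bx)$.} On a block with $\widetilde{\psi}<\frac18$ and $|h|$ of constant order, \eqref{eq: tildepsi<1/8} forces $\|\nabla\bar{\psi}(\bx^i)\|\gtrsim|h(\bx^i)|/\sqrt n$. Lemma~\ref{lem: g<0} gives $\bg\vleq-4\nabla\bar\Psi(\by)$ for every subgradient. The paper therefore takes $\bw'\propto2\bu+\bv$, where $\bv$ is your block-constant vector and $\bu=-\nabla\bar\Psi(\bx)/\|\nabla\bar\Psi(\bx)\|$ is supported wherever $\bx$ is active. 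It then bounds the two pieces separately:
\begin{itemize}
\item $\inner{\bv,\bg}\geq\frac{1}{\sqrt n}-8\|\nabla\bar\Psi(\bx)\|-\dots$, crudely, via Lemma~\ref{lem: r function} and the cosine between $\br(\bx)$ and $[\bsigma(\by)]_+$;
\item $\inner{\bu,\bg}\geq4\|\nabla\bar\Psi(\bx)\|-\frac{24\delta'}{n}$, via Lemma~\ref{lem: g<0} and the $\frac6n$-Lipschitzness of $\nabla\bar\Psi$.
\end{itemize}
The factor $2$ cancels the $\|\nabla\bar\Psi(\bx)\|$ terms exactly, so neither a case analysis on $h$ nor your $\ell_1$ refinement is needed.

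Your first step (the lower bound on $\|\br(\bx)\|$) and the final averaging step are fine as they stand.
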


\begin{proof}[Proof of Lemma~\ref{lem: goldstein large}]

Fix $\bx\in\reals^d$ such that $\bx^B=\bzero$.
Let $\by\in\BB(\bx,\delta')$ and $\bg\in\partial F(\by)$.
Fix $i\in[B]$ to be the minimal index such that $\widetilde{\psi}(\bx^{i})\leq \frac{1}{16}$ (such an $i$ exists since $\widetilde{\psi}(\bx^{B})=\widetilde{\psi}(\bzero)=0$). If $i=1$, then $r_1(\bx)=3/4$ directly. If $i\geq2$, note that
\begin{align*}
r_i(\bx)
=[\sigma_i(\bx)-h(\bx^i)]_+
=\Biggl[s\bigl(\underset{>1}{\underbrace{32\widetilde{\psi}(\bx^{i-1})-1}}\bigr)
 -2s\bigl(\underset{\leq 0}{\underbrace{16\widetilde{\psi}(\bx^i)-1}}\bigr)-\frac{1}{4}\Biggr]_+
 =\left[1-\frac{1}{4}\right]_+=\frac{3}{4}~,
\end{align*}
and so
\begin{align} \label{eq: r>3/4}
\|\br(\bx)\|
\geq \left|r_i(\bx)\right|\geq \frac{3}{4}
> \half
~.
\end{align}
Define $\bv\in\reals^d$ in block form as
\[
\bv^i:=-\frac{r_i(\bx)}{\sqrt{n}\|\br(\bx)\|}\bone~,
\]
and further let
\[
\bu:=\begin{cases}
-\frac{\nabla\bar{\Psi}(\bx)}{\|\nabla\bar{\Psi}(\bx)\|}, & \nabla\bar{\Psi}(\bx)\neq\bzero\\
\bzero, & \text{otherwise}
\end{cases}~.
\]
If $\bu\neq\bzero$, then we see that
\begin{align}
\inner{\bu,\bg}
&=\binner{\frac{\nabla\bar{\Psi}(\bx)}{\|\nabla\bar{\Psi}(\bx)\|},-\bg}
\overset{\text{Lemma~\ref{lem: g<0}}}{\geq}4\binner{\frac{\nabla\bar{\Psi}(\bx)}{\|\nabla\bar{\Psi}(\bx)\|},\nabla\bar{\Psi}(\by)}
=4\|\nabla\bar{\Psi}(\bx)\|+4\binner{\frac{\nabla\bar{\Psi}(\bx)}{\|\nabla\bar{\Psi}(\bx)\|},\nabla\bar{\Psi}(\by)-\nabla\bar{\Psi}(\bx)}
\nonumber\\&\geq4\|\nabla\bar{\Psi}(\bx)\|-4\|\nabla\bar{\Psi}(\by)-\nabla\bar{\Psi}(\bx)\|
\nonumber\\&\geq 4\|\nabla\bar{\Psi}(\bx)\|-\frac{24\delta'}{n}
\label{eq: <ug>}
\end{align}
where the last inequality holds since $\nabla\bar{\Psi}$ is $\frac{6}{n}$-Lipschitz, which is easily verified by direct calculation.
The inequality above clearly holds if $\bu=\bzero$ as well.
Applying Lemma~\ref{lem: r function} and again using the fact that $\nabla\bar{\Psi}$ is $6/n$-Lipschitz gives
\begin{align}
\|[\bsigma(\by)]_+-\br(\bx)\|
\leq2\sqrt{n}\|\nabla\bar{\Psi}(\by)\|+\frac{1}{10^4}+\frac{288\delta'}{\sqrt{n}}
\leq2\sqrt{n}\|\nabla\bar{\Psi}(\bx)\|+\frac{1}{10^4}+\frac{300\delta'}{\sqrt{n}}~.
\label{eq: nearby sigma-r}
\end{align}
Moreover, if $[\bsigma(\by)]_+\neq \bzero$ then
\begin{align}
\inner{\bv,\bg}
&= \sum_{i\in[B]}\inner{\bv^i,\bg^i}
= \frac{1}{\sqrt{n}\|\br(\bx)\|}\sum_{i\in[B]}r_i(\bx)\left(-\sum_{j\in[n]}g^i_j\right)
\nonumber\\&\geq \frac{1}{\sqrt{n}\|\br(\bx)\|}\sum_{i\in[B]}r_i(\bx)\frac{[\sigma_i(\by)]_+}{\|[\bsigma(\by)]_+\|}
= 
\frac{1}{\sqrt{n}}\binner{\frac{\br(\bx)}{\|\br(\bx)\|},\frac{[\bsigma(\by)]_+}{\|[\bsigma(\by)]_+\|}}
\nonumber\\&\geq 
\frac{1}{\sqrt{n}}\left(1-\frac{2\|\br(\bx)-[\bsigma(\by)]_+\|}{\|\br(\bx)\|}\right)
\nonumber\\&\geq \frac{1}{\sqrt{n}}\left(1-4\|\br(\bx)-[\bsigma(\by)]_+\|\right)
\nonumber\\
&\geq\frac{1}{\sqrt{n}}-8\left\|\nabla\bar{\Psi}(\bx)\right\|-\frac{4}{10^4\sqrt{n}}-\frac{1200\delta'}{n}
\label{eq: <vg>}
\end{align}
where the first inequality is due to Lemma~\ref{lem: gi1>=}, the second is the elementary inequality $\inner{\frac{\ba}{\|\ba\|},\frac{\bb}{\|\bb\|}}\geq 1-\frac{2\|\ba-\bb\|}{\|\ba\|}$ for any $\ba,\bb\neq\bzero$, the third is by \eqref{eq: r>3/4}, and the last is by \eqref{eq: nearby sigma-r}.
The inequality above also holds if $[\bsigma(\by)]_+=\bzero$, since in that case $\inner{\bv,\bg}\geq 0\geq \frac{1}{\sqrt{n}}\left(1-4\|\br(\bx)\|\right)$ by Lemma~\ref{lem: g<0} and \eqref{eq: r>3/4}, and \eqref{eq: nearby sigma-r} again gives the same final lower bound.

Let $\bw':=\frac{2\bu+\bv}{\|2\bu+\bv\|}$.
Combining Eqs. (\ref{eq: <ug>}), (\ref{eq: <vg>}) and the fact that by construction $\|\bu\|\leq1$, $\|\bv\|=1$, and $\bu,\bv\vleq\bzero$, thus $0<\|2\bu+\bv\|\leq 3$,
we get that
\begin{align*}
\binner{\bw',\bg}
\geq
\frac{1}{3}\left(8\|\nabla\bar{\Psi}(\bx)\|-\frac{48\delta'}{n}
+\frac{1}{\sqrt{n}}-8\left\|\nabla\bar{\Psi}(\bx)\right\|-\frac{4}{10^4\sqrt{n}}-\frac{1200\delta'}{n}\right)
\geq \frac{1}{4\sqrt{n}}~,
\end{align*}
the last inequality following from the assumption on $\delta'$.

Finally, since the inequality above holds for any $\bg\in\partial F(\by),~\by\in\BB(\bx,\delta')$,
we see that it must also hold for any $\bg\in\partial_{\delta'} F(\bx)$ by taking convex combinations. Thus, for any $\bg\in\partial_{\delta'} F(\bx):$
\[
\dist(\bzero,\partial_{\delta'} F(\bx))
\geq\|\bg\|
\geq\binner{\bw',\bg}
>\frac{1}{5\sqrt{n}}~.
\]

\end{proof}

We are ready to complete the proof.
Given $L,\Delta,\delta,\eps>0$ as in the theorem, set
\[
N:=\left(\frac{L}{1600\eps}\right)^2,\qquad
n:=2\left\lfloor\frac{N-1}{2}\right\rfloor+1,\qquad
B:=\left\lfloor\frac{\Delta}{8\cdot10^6\delta\eps}\right\rfloor,
\qquad d:=Bn~.
\]
Then $n\geq99$ is odd, $N/2\leq n\leq N$, and $2\leq B\leq n$.
Consider the scaled function
\[
f(\bx):=\frac{\Delta}{200 B}F\left(\frac{1600B\sqrt{n}\eps}{\Delta}\cdot \bx\right)~.
\]
By Lemma~\ref{lem: F Lip} it holds that
\[
f(\bzero)-\inf f
\leq\frac{\Delta}{200B}\cdot 200B
\leq\Delta
\]
and, since $B\leq n$,
\[
\mathrm{Lip}(f)\leq \frac{\Delta}{200B}\cdot\frac{1600B\sqrt{n}\eps}{\Delta}\cdot 200
=1600\sqrt{n}\eps\leq L~.
\]
Furthermore, the radius after scaling satisfies
\[
\delta':=\frac{1600B\sqrt{n}\eps}{\Delta}\cdot\delta
\leq\frac{\sqrt{n}}{5000}~.
\]
Since scaling does not affect the gradient support, it still holds by Lemma~\ref{lem: zero chain} that when applying a zero-respecting algorithm to $f$, all the iterates $\bx_1,\dots,\bx_T$ must satisfy $\bx_t^{B}=\bzero$ unless
\[
T> \frac{Bn}{64}
=\Theta\left(\frac{\Delta L^2}{\delta\eps^3}\right)
~.
\]
But whenever $\bx_t^{B}=\bzero$ it also holds that
$\frac{1600B\sqrt{n}\eps}{\Delta}
\cdot\bx_t^B=\bzero$, hence Lemma~\ref{lem: goldstein large} ensures that
\[
\dist(\bzero,\partial_{\delta}f(\bx_t))
=8\sqrt{n}\eps\cdot\dist\left(\bzero,\partial_{\delta'}F\left(\frac{1600B\sqrt{n}\eps}{\Delta}
\cdot\bx_t\right)\right)
\geq \frac{8\sqrt{n}\eps}{5\sqrt{n}}=\frac{8}{5}\eps>\eps~.
\]

\subsection{Proof of Theorem~\ref{thm: lambda}}

We use the same unscaled hard function $F$ and block notation as in the proof of Theorem~\ref{thm: main}.
Given $L,\Delta,\lambda,\eps>0$ as in the theorem, let
\[
\delta:=2\sqrt{\frac{\eps}{\lambda}}~,
\]
and set
\[
N:=\left(\frac{L}{1600\eps}\right)^2,\qquad
n:=2\left\lfloor\frac{N-1}{2}\right\rfloor+1,\qquad
B:=\left\lfloor\frac{\Delta\sqrt{\lambda}}{16\cdot10^6\eps^{3/2}}\right\rfloor,
\qquad d:=Bn~.
\]
The assumptions in the theorem imply that $\delta\eps\leq\Delta/(16\cdot10^6)$, and the parameter assignment above are precisely those in the proof of Theorem~\ref{thm: main} for this choice of $\delta$.
In particular, $n\geq99$ is odd, $N/2\leq n\leq N$, and $2\leq B\leq n$.
Consider the same scaled function
\[
f(\bx):=\alpha F(\beta\bx),\qquad
\alpha:=\frac{\Delta}{200B},\qquad
\beta:=\frac{1600B\sqrt{n}\eps}{\Delta}~.
\]
As already verified using Lemma~\ref{lem: F Lip}, $f$ is $L$-Lipschitz and satisfies $f(\bzero)-\inf f\leq\Delta$.

We claim that whenever $\bx^B=\bzero$, it holds that $\Pcal_\lambda f(\bx)\geq2\eps$.
To see this, recall that by Lemma~\ref{lem: goldstein large}, applied at $\beta\bx$, there is a unit vector $\bw'\vleq\bzero$
satisfying for all $\bz\in\BB(\beta\bx,\beta\delta)\subseteq\BB(\beta\bx,\frac{\sqrt{n}}{5000}),~\bg'\in\partial F(\bz):~\inner{\bw',\bg'}\geq\frac{1}{4\sqrt{n}}$.
Consequently, since $\partial f(\by)=\alpha\beta\,\partial F(\beta\by)$, it holds that for all $\by\in\BB(\bx,\delta),~\bg\in\partial f(\by):$
\begin{equation}\label{eq: wg>2eps}
\inner{\bw',\bg}\geq\frac{\alpha\beta}{4\sqrt{n}}=2\eps~.
\end{equation}
Furthermore, Lemma~\ref{lem: g<0} implies that $\bg\vleq\bzero$ for every $\bg\in\partial f(\by),~\by\in\reals^d$, and since $\bw'\vleq\bzero$, we also have (without assuming $\by\in\BB(\bx,\delta)$):
\begin{equation}\label{eq: wg>=0}
\inner{\bw',\bg}\geq0~.
\end{equation}

Now, let $\by,\bg\in\reals^d$ be random vectors with $\bg\in\partial f(\by)$ almost surely.
If $\by\in\BB(\bx,\delta)$, then by \eqref{eq: wg>2eps}:
\[
\inner{\bw',\bg}+\lambda\|\bx-\by\|^2
\geq 2\eps+0
=2\eps~,
\]
while if $\by\notin \BB(\bx,\delta)$, then by \eqref{eq: wg>=0}:
\[
\inner{\bw',\bg}+\lambda\|\bx-\by\|^2
\geq 0+\lambda\delta^2=4\eps~,
\]
and so overall
\[
\|\E[\bg]\|+\lambda\E\|\bx-\by\|^2
\geq
\E\left[\inner{\bw',\bg}+\lambda\E\|\bx-\by\|^2\right]
\geq 2\eps~.
\]
Taking the infimum over all such random vectors proves that $\Pcal_\lambda f(\bx)\geq2\eps$, as claimed.

Finally, positive scaling preserves subgradient supports, so Lemma~\ref{lem: zero chain} ensures that all of the first $T\leq Bn/64$ iterates of any zero-respecting algorithm satisfy $\bx_t^B=\bzero$.
For all these iterates, $\Pcal_\lambda f(\bx_t)\geq2\eps>\eps$, so they are not $(\lambda,\eps)$-stationary.
The parameter assignment gives
\[
\frac{Bn}{64}
=\Theta\left(\frac{\Delta L^2\sqrt{\lambda}}{\eps^{7/2}}\right)
~,
\]
completing the proof.

\section{Discussion}

In this work, we proved tight lower bounds which establish the first-order complexity of finding Goldstein stationary points, as well as a related relaxation thereof.
In particular, our results indicate that in nonsmooth nonconvex optimization, algorithms do not suffer from gradient stochasticity, thus answering an open question posed by \citet{cutkosky2023optimal}.

It is interesting to note that in \emph{smooth} nonconvex optimization, optimal convergence rates are in fact affected by gradient stochasticity ($\eps^{-2}$ vs. $\eps^{-4}$). By contrast, in nonsmooth \emph{convex} optimization, it is well-known that optimal complexities remain the same, since deterministic and stochastic subgradient methods converge similarly over convex functions (see e.g., \citealp{bubeck2015convex}).
Thus, it can be (very) informally argued that nonsmooth nonconvex optimization behaves more similarly to nonsmooth convex optimization, than to smooth nonconvex optimization.

Finally, we note that while we established tight lower bounds for (possibly randomized) zero-respecting algorithms, which already covers algorithms of interest, our lower bound construction can be extended via known reductions to account for arbitrary randomized algorithms which use a local oracle \citep{carmon2020lower}.
We plan this extension for a future version of this work.

\subsection*{Statement on AI use}

The author used ChatGPT 6 Pro when working on this paper.
In more detail,
the author had sought this result for several years ever since working on \citep{kornowski2022complexity}, in which a weaker lower bound was proved.
Throughout these years, the author made several failed attempts at proving the tight lower bound (including prompting various LLMs, which were useless for this problem). These endeavors resulted in an unpublished note based on the author's accumulated knowledge of the problem, in which partial progress was made based on the partition into coordinate blocks, as conveyed in the paragraphs following Theorem~\ref{thm: main}.
Recently, the author read \citet{li2026optimal}, in which the authors used GPT 5.6 Sol to prove an $\Omega(1/\eps^{4})$ lower bound on the deterministic complexity of weakly-convex optimization, using a very similar block-based approach.\footnote{Note that this setting is incomparable to ours: On the one hand, weakly-convex optimization is easier by imposing an additional assumption on the objective; however, the optimality criterion in that setting, of finding an approximate-stationary point of the Moreau envelope, implies being close to an $\eps$-stationary point, which is a stronger condition than Goldstein stationarity (and is, in fact, impossible to achieve efficiently in Lipschitz optimization, \citealp{kornowski2022oracle}).}
This led the author to revisit the problem using the latest version of ChatGPT, prompted with the aforementioned note attached. At first, it still failed. Yet, after a conversation in which both the author and the AI contributed ideas, the key missing idea of defining $h$ as a median (instead of a max, as is more typical to such lower-bound constructions) emerged.
The author then wrote the complete analysis, as well as the rest of the paper.
The author takes full responsibility for the paper's content.


\bibliographystyle{plainnat}
\bibliography{bib}

\end{document}

%% file: symbols.tex
\newtheorem{theorem}{Theorem}[section]

\newtheorem*{question*}{Question}

\newtheorem*{bigquestion*}{Big Question}

\newtheorem{lemma}[theorem]{Lemma}

\newtheorem{definition}[theorem]{Definition}
\newtheorem*{definition*}{Definition}
\newtheorem{remark}[theorem]{Remark}

\newtheorem*{keyquestion*}{Key Question}

\renewcommand{\eqref}[1]{Eq.~(\ref{#1})}

\newcommand{\reals}{\mathbb{R}}

\newcommand{\E}{\mathbb{E}}

\newcommand{\half}{\frac{1}{2}}

\newcommand{\bzero}{\mathbf{0}}
\newcommand{\bone}{\mathbf{1}}
\newcommand{\dist}{\mathrm{dist}}
\renewcommand{\SS}{\mathbb{S}}
\newcommand{\NN}{\mathbb{N}}

\newcommand{\ba}{\mathbf{a}}
\newcommand{\be}{\mathbf{e}}
\newcommand{\e}{\mathbf{e}}
\newcommand{\bx}{\mathbf{x}}
\newcommand{\x}{\mathbf{x}}
\newcommand{\bw}{\mathbf{w}}

\newcommand{\bg}{\mathbf{g}}
\newcommand{\g}{\mathbf{g}}
\newcommand{\bb}{\mathbf{b}}
\newcommand{\bu}{\mathbf{u}}

\newcommand{\bv}{\mathbf{v}}

\newcommand{\bz}{\mathbf{z}}
\newcommand{\br}{\mathbf{r}}

\newcommand{\by}{\mathbf{y}}
\newcommand{\y}{\mathbf{y}}

\newcommand{\bsigma}{\boldsymbol{\sigma}}

\newcommand{\Gcal}{\mathcal{G}}

\newcommand{\Pcal}{\mathcal{P}}

\newcommand{\Xcal}{\mathcal{X}}

\newcommand{\norm}[1]{\|#1\|}

\newcommand{\inner}[1]{\langle#1\rangle}
\newcommand{\binner}[1]{\left\langle#1\right\rangle}

\newcommand{\B}{\mathbb{B}}
\newcommand{\BB}{\mathbb{B}}

\newcommand{\conv}{\mathrm{conv}}

\renewcommand{\S}{\mathbb{S}}

\newcommand{\vleq}{\preceq}
\newcommand{\vgeq}{\succeq}

\newcommand{\supp}{\mathrm{supp}}

\newcommand{\eps}{\epsilon}